\theoremstyle{plain}
\newtheorem{theorem}{Theorem}[section]
\newtheorem{lemma}[theorem]{Lemma}
\newtheorem{proposition}[theorem]{Proposition}
\newtheorem{corollary}[theorem]{Corollary}
\theoremstyle{definition}
\theoremstyle{remark}
\newtheorem{remark}[theorem]{Remark}
\def \R{\mathbb{R}}
\def \Z{\mathbb{Z}}
\def \C{\mathbb{C}}
\def \t {\mathfrak{t}}
\def \M {\mathfrak{M}} 
\def \s {\mathfrak{s}}
\def \S{\Sigma}
\begin{document}

\title{On the relative slice Thurston-Bennequin inequality}

\author{Georgi D. Gospodinov}

\address{ Georgi D. Gospodinov, Bard College, PO Box 5000, Annandale-on-Hudson, NY 12504-5000}

\email{ggospodi@bard.edu}

\keywords{slice Bennequin inequality, relative cobordism invariants}

\begin{abstract} 
We derive a relative version of the slicing Bennequin inequalities for cobordant Legendrian knots, and review a few proofs of the result.
\end{abstract}

\maketitle

\section{Introduction}

In \cite{georgi}, we studied relative invariants of Legendrian knots that are homologous to a fixed knot in a contact 3-manifold. For Legendrian homologous knots $K$ and $J$ in $(M,\xi)$ and an embedded surface $\Sigma$ with $K\cup J=\partial\Sigma$, the {\em Thurston-Bennequin invariant of $K$ relative to $J$} is $\widetilde{tb}_\Sigma(K,J):=tw_K(\xi,Fr_{\Sigma})-tw_{J}(\xi,Fr_{\Sigma}),$ where $Fr_\Sigma$ denotes the Seifert framing that $K$ (resp. $J$) inherits from $\Sigma$, and $tw(\xi,Fr_\Sigma)$ denotes the number of $2\pi$-twists (with sign) of the contact framing relative to $Fr_\Sigma$ along $K$ or $J$. For push-offs $K'$ and $J'$ of $K$ and $J$ in the direction normal to the contact planes, $\widetilde{tb}_\Sigma(K,J)=K'\cdot\Sigma-J'\cdot\Sigma=lk_\Sigma(K',K)-lk_\Sigma(J',J)$. Since $\xi\rvert_\Sigma$ is trivial, its restriction to $K$ gives a map $\sigma:\xi\rvert_K\rightarrow K\times\R^2$, under which a non-zero tangent vector field $v_K$ to $K$ traces out a path of vectors in $\R^2$. Similarly for $J$. The \textit{relative rotation number of $K$} is $\widetilde{r}_\Sigma(K,J):=w_\sigma(v_K)-w_\sigma(v_J)$. Equivalently, $\widetilde{r}_\Sigma(K,J)=e(\xi,v_K\cup v_J)([\Sigma])$.

The goal of this note is to extend this study to the case where the surface is in a 4-dimensional cobordism between contact 3-manifolds, each containing one of the knots cobounding the surface. This requires some 4-dimensional techniques and avoids the difficulty of keeping track of the surface under Legendrian isotopy of each knot, in particular, the two knots do not intersect (compare with \cite{georgi}). 

\section{Definitions}

We define the invariants in this case by extending the definition of invariants of slice Legendrian knots introduced by Mrowka and Rollin in \cite{MR}.

Let $W$ be an oriented 4-manifold with $\partial W=(M_1,\xi_1)\cup\cdots\cup(M_n,\xi_n)$, where $(M_i,\xi_i)$ is a (connected) closed contact 3-manifold  for $i=1,\dots,n$ with coorientable contact structure $\xi_i$ which provides an orientation of $(M_i,\xi_i)$.  Let $K_i\subset M_i$ be Legendrian and assume that there is an embedded surface $F$ in $W$ with $\partial F=K_1\cup\cdots\cup K_n$. Since the $\xi_i$ are coorientable, we can choose a vector field $v_i$ transverse to $\xi_i$ on $M_i\subset\partial W$  such that $v_i$ points along the positive normal of $\xi_i$. Note that we can extend the $v_i$ to a vector field on a neighborhood of $\partial W$ such that $v\rvert_{M_i}=v_i$, and further, we can extend $v$ to a neighborhood of $F$ and then to all of $W$, since the obstruction for this lies in $H^2(W,\partial W,\pi_2(S^3))$, which is trivial. Let $\{\phi_t\}$ be the flow of $v$, and for a small $\epsilon>0$, take a push-off $F'=\varphi_\epsilon(F)$ such that $K_i'=\varphi_\epsilon(K_i)$ and $\partial F'=K_1'\cup K_2'$. Since the boundaries of $F$ and $F'$ are disjoined, their intersection $F'\cdot F$ in $W$ is well-defined, and the {\em Thurston-Bennequin number} of the $K_i$ is defined as 
$$\widetilde{tb}(K_1,\dots,K_n,F)=F\cdot F'.$$ 
Note that $\widetilde{tb}(K_1,\dots,K_n,F)$ depends on $F$ only through the relative homology class $[F]\in H_2(W,K_1\cup\cdots\cup K_n;\Z)$ and is well-defined. Now let $\mathfrak{s}\in Spin^{\mathbb{C}}(W)$ and let $\mathfrak{t_{\xi_i}}$ be the canonical $Spin^{\mathbb{C}}$-structure on $M_i$ associated to $\xi_i$, $i=1,\dots,n$. We have isomorphisms $h_i:\mathfrak{s}\rvert_{M_i}\rightarrow \xi_i$ which induce isomorphisms $\det(h_i):\det(\mathfrak{s})\rvert_{M_i}\rightarrow\xi_i$ for a choice of complex structure on $\xi_i$. Let $u_{K_i}$ be the positive unit tangent vector fields along $K_i$. Define the {\em rotation number} of the Legendrian knots $K_i$ as 
$$\widetilde{r}(K_1,\dots,K_n,F,\mathfrak{s},h_1,\dots,h_n):=$$
$$c_1\Big{(}\det(\mathfrak{s}),\det(h_1)^{-1}(u_{K_1})\cup\cdots\cup \det(h_n)^{-1}(u_{K_n})\Big{)}([F]).$$ 
Again, $\widetilde{r}(K_1,\dots, K_n,F,\mathfrak{s},h_1,\dots,h_n)$ depends on $F$ only through its relative homology class $[F]\in H_2(W,K_1\cup\cdots\cup K_n;\Z)$ and on the isomorphism type of the pairs $(\mathfrak{s},h_i)$ in $Spin^{\mathbb{C}}(W,\xi_i)$. Consider the case when there exists a symplectic form $\omega$ on $W$ such that $\omega\rvert_{\xi_i}>0$ (so $W$ is a weak symplectic filling of $(M_1,\xi_1)\cup\cdots\cup (M_n,\xi_n)$). Then $\omega$ determines a canonical $Spin^\C$-structure $\s_\omega$ on $W$ and canonical isomorphisms $h^\omega_i:\s_\omega\rvert_{M_i}\rightarrow\mathfrak{t}_{\xi_i}$ for $i=1,\dots,n$ giving us the rotation number $\widetilde{r}(K_1,\dots,K_n,F,\s_\omega,h^{\omega}_1,\dots,h^{\omega}_n)$.

Of interest to us is the case when $n=2$. Let $(M_i,\xi_i)$ be a closed contact 3-manifold  for $i=1,2$ with coorientable contact structure $\xi_i$ and consider an oriented compact 4-manifold $W$ such that the oriented boundary of $W$ equals $-M_1\cup M_2$. Let $K_i$ be a Legendrian knot in $M_i$ and assume that there is an embedded surface $F$ in $W$ with $\partial F=K_1\cup K_2$. We call such Legendrian knots $K_1$ and $K_2$ {\em cobordant}. As above, we obtain a vector field $v$ on $\partial W$ transverse to $\xi_1$ and $\xi_2$ such that $v$ points along the negative normal of $\xi_1$ and the positive normal of $\xi_2$, and we extend $v$ to all of $W$. Following the above construction, we have the {\em relative Thurston-Bennequin number} of $K_1$ and $K_2$ as 
$$\widetilde{reltb}(K_1,K_2,F):=\widetilde{tb}(K_1,K_2,F)=F'\cdot F.$$
It depends on $F$ only through the relative homology class $[F]\in H_2(W,K_1\cup K_2;\Z)$ and is well-defined. Also, for $\mathfrak{s}\in Spin^{\mathbb{C}}(W)$ we define the {\em relative rotation number} of the Legendrian knots $K_1$ and $K_2$ to be 
$$\widetilde{relr}(K_1,K_2,F,\mathfrak{s},h_1,h_2):=\widetilde{r}(K_1,K_2,F,\mathfrak{s},h_1,h_2).$$
If there exists a symplectic form $\omega$ on $W$ such that $\omega\rvert_{\xi_1}<0$ and $\omega\rvert_{\xi_2}>0$ (so $(M_1,\xi_1)$ is the {\em concave} end and $(M_2,\xi_2)$ is the {\em convex} end of the {\em symplectic cobordism} $(W,\omega)$), then $\omega$ determines a canonical $Spin^\C$-structure $\s_\omega$ on $W$ and canonical isomorphisms $h_{\omega,i}:\s_\omega\rvert_{M_i}\rightarrow\mathfrak{t}_{\xi_i}$ for $i=1,2$ giving us the relative rotation number 
$$\widetilde{relr}(K_1,K_2,F,\s_\omega,h^{\omega}_1,h^{\omega}_2)=\widetilde{r}(K_1,K_2,F,\s_\omega,h^{\omega}_1,h^{\omega}_2).$$

\begin{remark}\label{remark1}(Basic properties) Since $K_i$ is oriented as a component of $\partial F$,
$$\widetilde{tb}(K_1,\dots,K_n,F)=\widetilde{tb}(-K_1,\dots,-K_n,-F),\ \ $$ 
and
$$\widetilde{r}(K_1,\dots,K_n,F,\s,h_1,\dots,h_n)=-\widetilde{r}(-K_1,\dots,-K_n,-F,\s,h_1,\dots,h_n).$$
Moreover, if there exists a Seifert surface $\S_i\hookrightarrow M_i$ with $\partial\S_i=K_i$, then we can define $tb_{\S_i}(K_i)$ and $r_{\S_i}(K_i)$ in the classical sense for $i=1,\dots,n$. Then 
$$\widetilde{tb}(K_1,\dots,K_n,F)=\sum_{i=1}^n tb_{\S_i}(K_i).$$
Similarly, 
$$\widetilde{r}(K_1,\dots,K_n,F,\s,h_1,\dots,h_n)=\sum_{i=1}^n r_{\S_i}(K_i).$$ 
In the relative case
$$\widetilde{reltb}(K_1,K_2,F)=tb_{\S_2}(K_2)-tb_{\S_1}(K_1)$$
and
$$\widetilde{relr}(K_1,K_2,F,\s_\omega,h_1,h_2)=r_{\S_2}(K_2)-r_{\S_1}(K_1),$$
which is a direct generalization of the relative invariants defined in \cite{georgi}. 
\end{remark}

\section{Statement of Results}

We generalize the construction outlined in \cite{MR, Wu1} to obtain the following.

\begin{theorem}\label{rel-slice-genus} 
Let $W$ be a 4-manifold with boundary $(M_1,\xi_1)\cup\cdots\cup (M_n,\xi_n)$, where $(M_i,\xi_i)$ are connected contact 3-manifolds, and let $K_i\subset (M_i,\xi_i)$ be a Legendrian knot for $i=1,\dots,n$. 
\begin{enumerate}[(a)]
\item Let $F$ be any embedded surface in $W$ with $\partial F=K_1\cup\cdots\cup K_n$ and $W'$ be any 4-manifold obtained from $W$ by attaching enough 1-handles away from $F$ so that $W'$ has connected boundary $(M,\xi)$ such that $\xi\rvert_{M_i}=\xi_i$ for all $i$. 

\subitem (i) If there is a $Spin^{\mathbb{C}}$-structure $\s$ on $W'$ with $$F^+_{W'\setminus B,\s\rvert_{W'\setminus B}}\big{(}c^+(\xi))\big{)}\neq0,$$ where $B$ is an embedded $4$-ball in the interior of $W'$, then there is an isomorphism $h:\s\rvert_{M}\rightarrow \t_{\xi}$ such that $$\widetilde{tb}(K_1,\dots,K_n,F)+\rvert \widetilde{r}(K_1,\dots,K_n,F,\s\rvert_{W'},h)\rvert\leq-\chi(F).$$

\subitem(ii) If there is a $Spin^{\mathbb{C}}$-structure $\s$ on $W'$ such that $$sw_{(W',\xi)}(\s,h)\neq 0$$ for an isomorphism $h:\s\rvert_M\rightarrow\t_\xi$, then $$\widetilde{tb}(K_1,\dots,K_n,F)+\rvert \widetilde{r}(K_1,\dots,K_n,F,\s\rvert_{W'},h)\rvert\leq-\chi(F).$$

\item Let $F$ be any embedded surface in $W$ with $\partial F=K_1\cup\cdots\cup K_n$. 

\subitem (i) If there is a $Spin^{\mathbb{C}}$-structure $\s$ on $W$ with $$\widehat{F}_{W\setminus B,\s\rvert_{W\setminus B}}\big{(}c(\xi_1)\otimes\cdots\otimes c(\xi_n)\big{)}\neq0,$$ where $B$ is an embedded $4$-ball in the interior of $W$, then there are isomorphisms $h_i:\s\rvert_{M_i}\rightarrow \t_{\xi_i}$ for $i=1,\dots, n$, such that $$\widetilde{tb}(K_1,\dots,K_n,F)+\rvert \widetilde{r}(K_1,\dots,K_n,F,\s\rvert_{W},h_1,\dots,h_n)\rvert\leq-\chi(F).$$

\subitem(ii) If there is a $Spin^{\mathbb{C}}$-structure $\s$ on $W$ with $\s\rvert_{M_i}=\xi_i$ such that $$sw_{(W,\xi_1\cup\cdots\cup\xi_n)}(\s,h_1,\dots,h_n)\neq 0$$ for isomorphisms $h_i:\s\rvert_{M_i}\rightarrow \t_{\xi_i}, i=1,\dots, n$, then $$\widetilde{tb}(K_1,\dots,K_n,F)+\rvert \widetilde{r}(K_1,\dots,K_n,F,\s\rvert_{W},h_1,\dots,h_n)\rvert\leq-\chi(F).$$

\item If $W$ is a weak symplectic filling of $(M_1,\xi_1)\cup\cdots\cup (M_n,\xi_n)$, then for any embedded surface $F$ with $\partial F=K_1\cup\cdots\cup K_n$, 
$$\widetilde{tb}(K_1,\dots,K_n,F)+\rvert \widetilde{r}(K_1,\dots,K_n,F,\s_\omega,h^{\omega}_1,\dots,h^{\omega}_n)\rvert\leq-\chi(F).$$
\end{enumerate}
\end{theorem}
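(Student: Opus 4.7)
The plan is to adapt the Mrowka-Rollin capping argument \cite{MR}, extended to multi-component boundaries by Wu \cite{Wu1}, to the present setting. For each Legendrian $K_i$, I would attach a Weinstein 2-handle along $K_i$ with framing $tb(K_i)-1$. After all $n$ such attachments, one obtains an enlarged 4-manifold $\widehat{W}$ (built from $W$ in parts (b) and (c), and from $W'$ in part (a)) whose boundary is obtained from the original by Legendrian surgery along each $K_i$. The surface $F$ extends to a closed surface $\widehat{F}$ in $\widehat{W}$ by gluing on the $n$ core disks of the handles.

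Next, I would establish three numerical identities, all verifiable locally near the handles. First, the self-intersection satisfies $[\widehat{F}]\cdot[\widehat{F}] = \widetilde{tb}(K_1,\dots,K_n,F) - n$: the correction $-n$ arises from the discrepancy between the contact framing used to define the push-off $F'$ (hence $\widetilde{tb}$) and the Legendrian surgery framing $tb-1$, and can be verified on the model case of a Legendrian unknot bounding a disk in $S^3 = \partial B^4$. Second, letting $\widehat{\mathfrak{s}}$ denote the natural extension of $\mathfrak{s}$ by the canonical Stein structure on each handle, one has $\langle c_1(\det\widehat{\mathfrak{s}}),[\widehat{F}]\rangle = \widetilde{r}(K_1,\dots,K_n,F,\mathfrak{s},h_1,\dots,h_n)$, since the canonical trivializations over the core disks interpolate between $\det(h_i)^{-1}(u_{K_i})$ and the Stein trivialization. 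Third, capping off $n$ boundary circles by disks yields $\chi(\widehat{F}) = \chi(F) + n$.

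The third step is to apply an adjunction inequality to $\widehat{F} \subset \widehat{W}$. In case (a)(i) this is the Ozsv\'ath-Szab\'o adjunction coming from a non-trivial cobordism map $F^+$; in cases (a)(ii) and (b)(ii) it is the Kronheimer-Mrowka adjunction from non-vanishing Seiberg-Witten invariants; and in case (b)(i) it is the multi-component Ozsv\'ath-Szab\'o adjunction of \cite{Wu1}. The crucial point is that the non-vanishing hypothesis persists under the Weinstein handle attachments, by the naturality of the contact invariant under Legendrian surgery (Ozsv\'ath-Szab\'o, Plamenevskaya). Substituting the three identities into the adjunction bound $-\chi(\widehat{F}) \geq [\widehat{F}]\cdot[\widehat{F}] + |\langle c_1(\det\widehat{\mathfrak{s}}),[\widehat{F}]\rangle|$ gives $-\chi(F)-n \geq \widetilde{tb} - n + |\widetilde{r}|$, which rearranges to the desired inequality.

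For part (c), a theorem of Taubes (extended to weak symplectic fillings by Kronheimer-Mrowka) shows that the canonical $Spin^\C$-structure $\mathfrak{s}_\omega$ has $sw_{(W,\xi_1\cup\cdots\cup\xi_n)}(\mathfrak{s}_\omega,h^\omega_1,\dots,h^\omega_n) \neq 0$, reducing (c) to part (b)(ii). The hard part is verifying that the non-vanishing hypothesis survives all $n$ Weinstein surgeries---in particular, for part (b)(i), one must track how the tensor product $c(\xi_1)\otimes\cdots\otimes c(\xi_n)$ maps under the cobordism associated to the stack of handles, which is where the multi-component surgery analysis of \cite{Wu1} is indispensable.
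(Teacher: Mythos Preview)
Your capping-plus-adjunction strategy matches the paper's, and your three numerical identities are correct. But there is a genuine gap: the adjunction inequality you invoke, in the form $-\chi(\widehat{F})\geq [\widehat{F}]\cdot[\widehat{F}]+|\langle c_1,[\widehat{F}]\rangle|$, requires $g(\widehat{F})\geq 1$ and (in the versions available from $HF^+$ or Seiberg--Witten) $[\widehat{F}]\cdot[\widehat{F}]\geq 0$. After capping with $n$ core disks you have $\chi(\widehat{F})=\chi(F)+n$ and $[\widehat{F}]\cdot[\widehat{F}]=\widetilde{tb}-n$, so these conditions become $\chi(F)\leq -n$ and $\widetilde{tb}(K_1,\dots,K_n,F)\geq n$, neither of which is assumed. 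Without them, $\widehat{F}$ could be a sphere or have negative self-intersection, and the adjunction step fails. The paper handles this with a preliminary reduction (its Lemma~\ref{adjust}): connect-sum the $K_i$ with enough Legendrian right-handed trefoils to raise $\widetilde{tb}$ and lower $\chi(F)$ while keeping the quantity $\widetilde{tb}+|\widetilde{r}|+\chi(F)$ invariant. You need to insert this step before attaching the Weinstein handles.

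A secondary difference: the paper does not cite a 4-manifold adjunction directly. Instead, after the trefoil reduction it blows up $\widetilde{tb}-n$ points on the handle cores to force $[\widehat{F}]\cdot[\widehat{F}]=0$ (absorbing $\widetilde{tb}-n$ into $c_1$), takes a product neighborhood $U\cong\widehat{F}\times D^2$, factors the cobordism map through $\underline{HF}^+(\widehat{F}\times S^1)$, and applies the 3-manifold adjunction inequality there. Your shortcut of invoking $[\widehat{F}]^2+|c_1|\leq 2g-2$ directly is equivalent once the reduction is in place, but you should say which 4-manifold adjunction you mean and verify its hypotheses; for (b)(i) in particular the boundary of the cobordism is disconnected, and the paper's route through $\widehat{F}\times S^1$ sidesteps the need for a general disconnected-boundary adjunction (the multi-component input you attribute to \cite{Wu1} is not actually in that paper).
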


For part (a) of Theorem \ref{rel-slice-genus} above, we ensure that the cobordisms have connected boundary so that the $F^+$ homomorphisms are defined and the standard approach of Theorem \ref{slice-genus} applies: cap off $F$ with the cores of Weinstein 2-handles attached along the $K_i$ and apply the adjunction inequality (Theorem \ref{adjunction}). Notice that adding 1-handles gives potentially different diffeomorphism types of our cobordisms, however, this does not affect the genus bound of the surface $F$. Equivalently, the original Seiberg-Witten approach of Mrowka-Rollin \cite{MR} applies directly.

Part (b) establishes the relative genus bound without the addition of 1-handles, but the challenge here is that the boundary of the cobordism is disconnected. We use recent work of Matt Hedden \cite{hedden} and apply the hat version of HF to establish an adjunction inequality in the case of disconnected boundary. Alternatively, SW methods apply. 

Part (c) requires no non-vanishing assumptions, and is proved as in parts (a) and (b), depending on the choice of method. There is an extension of the Mrowka-Rollin Seiberg-Witten theory approach which gives the result in part (c), as was pointed out to the author by Matt Hedden. We briefly outline this approach at the end of the proof of part (c).

Theorem \ref{rel-slice-genus} produces an important corollary in the relative setup.

\begin{corollary}\label{cobrelsliceg} 
Let $W$ be an oriented cobordism between the contact 3-manifolds $(M_1,\xi_1)$ and $(M_2,\xi_2)$ so that $\partial W=-M_1\cup M_2$ and let $K_i\subset (M_i,\xi_i)$ be a Legendrian knot for $i=1,2$. 
\begin{enumerate}[(a)]
\item Let $F$ be any embedded surface in $W$ with $\partial F=K_1\cup K_2$ and $W'$ be any 4-manifold obtained from $W$ by attaching enough 1-handles away from $F$ so that $W'$ has connected contact boundary $(M,\xi)$ such that $\xi\rvert_{M_i}=\xi_i$ for $i=1,2$. 

\subitem (i) If there is a $Spin^{\mathbb{C}}$-structure $\s$ on $W'$ with $F^+_{W'\setminus B,\s\rvert_{W'\setminus B}}\big{(}c^+(\xi_1)\otimes c^+(\xi_2)\big{)}\neq0$, where $B$ is an embedded $4$-ball in the interior of $W'$, then there is an isomorphism $h:\s\rvert_{M}\rightarrow \t_{\xi}$ such that
$$\widetilde{reltb}(K_1,K_2,F)+\rvert \widetilde{relr}(K_1,K_2,F,\s\rvert_{W'},h)\rvert\leq-\chi(F).$$

\subitem(ii) If there is a $Spin^{\mathbb{C}}$-structure $\s$ on $W'$ such that $$sw_{(W',\xi)}(\s,h)\neq 0$$ for an isomorphism $h:\s\rvert_M\rightarrow\t_\xi$, $$\widetilde{reltb}(K_1,K_2,F)+\rvert \widetilde{relr}(K_1,K_2,F,\s\rvert_{W'},h)\rvert\leq-\chi(F).$$

\item Let $F$ be any embedded surface in $W$ with $\partial F=K_1\cup K_2$. 

\subitem (i) If there is a $Spin^{\mathbb{C}}$-structure $\s$ on $W$ with $\widehat{F}_{W\setminus B,\s\rvert_{W\setminus B}}\big{(}c(\xi_1)\otimes c(\xi_2)\big{)}\neq0$, where $B$ is an embedded $4$-ball in the interior of $W$, then there are isomorphisms $h_i:\s\rvert_{M_i}\rightarrow \t_{\xi_i}$ for $i=1,2$, such that for any embedded surface $F$ in $W$ with $\partial F=K_1\cup K_2$, 
$$\widetilde{reltb}(K_1,K_2,F)+\rvert \widetilde{relr}(K_1,K_2,F,\s\rvert_{W},h_1,h_2)\rvert\leq-\chi(F).$$

\subitem (ii) If there is a $Spin^{\mathbb{C}}$-structure $\s$ on $W$ with $\s\rvert_{M_i}=\xi_i$ such that $$sw_{(W,\xi_1\cup\xi_2)}(\s,h_1,h_2)\neq 0$$ for isomorphisms $h_i:\s\rvert_{M_i}\rightarrow \t_{\xi_i}, i=1, 2$, then $$\widetilde{reltb}(K_1,K_2,F)+\rvert \widetilde{relr}(K_1,K_2,F,\s\rvert_{W},h_1,h_2)\rvert\leq-\chi(F).$$

\item If $W$ is a symplectic cobordism, then for any embedded surface $F$ with $\partial F=K_1\cup K_2$, 
$$\widetilde{reltb}(K_1,K_2,F)+\rvert \widetilde{relr}(K_1,K_2,F,\s_\omega,h^{\omega}_1,h^{\omega}_2)\rvert\leq-\chi(F).$$
\end{enumerate}
\end{corollary}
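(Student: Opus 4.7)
The plan is to derive Corollary \ref{cobrelsliceg} as a direct specialization of Theorem \ref{rel-slice-genus} to the case $n=2$. By the definitions in Section~2, we have $\widetilde{reltb}(K_1,K_2,F)=\widetilde{tb}(K_1,K_2,F)$ and $\widetilde{relr}(K_1,K_2,F,\mathfrak{s},h_1,h_2)=\widetilde{r}(K_1,K_2,F,\mathfrak{s},h_1,h_2)$, so each inequality of the corollary is literally the $n=2$ instance of the corresponding inequality in the theorem, rewritten in the cobordism notation.

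First I would dispose of parts (a) and (b). In each of the sub-cases (a)(i), (a)(ii), (b)(i), (b)(ii), the non-vanishing hypothesis in the corollary is the $n=2$ specialization of the one in the matching part of Theorem \ref{rel-slice-genus}, once one identifies the tensor-product contact class $c^+(\xi_1)\otimes c^+(\xi_2)$ (respectively $c(\xi_1)\otimes c(\xi_2)$) with the contact class input required by the theorem, via the standard connected-sum identification of the contact class for $\xi=\xi_1\#\xi_2$ after the 1-handle attachments in part (a), and via the disjoint-union Künneth identification in part (b). Invoking the theorem in each sub-case and translating $\widetilde{tb},\widetilde{r}$ back into $\widetilde{reltb},\widetilde{relr}$ gives the stated inequalities immediately.

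The main obstacle is part (c). A symplectic cobordism with $\omega|_{\xi_1}<0$ on the concave end $(M_1,\xi_1)$ is not a weak symplectic filling in the strict sense of Theorem \ref{rel-slice-genus}(c), so that part of the theorem cannot be cited verbatim. To bridge this gap, I would reduce to part (b)(ii) of the corollary by establishing
\[
sw_{(W,\xi_1\cup\xi_2)}(\mathfrak{s}_\omega,h^\omega_1,h^\omega_2)\neq 0
\]
for the canonical $\mathrm{Spin}^{\mathbb{C}}$-structure and canonical isomorphisms determined by $\omega$. This is precisely the extension of the Mrowka--Rollin non-vanishing argument to symplectic cobordisms alluded to in the remarks following Theorem \ref{rel-slice-genus}: the proof stretches the neck along the convex end, exploits the localization of the monopole solution on the symplectization, and applies the standard gluing/cobordism formula for the relative Seiberg--Witten invariant. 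With this non-vanishing in hand, part (b)(ii) applied to $(\mathfrak{s}_\omega,h^\omega_1,h^\omega_2)$ yields the inequality of part (c).
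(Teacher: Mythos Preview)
Your treatment of parts (a) and (b) as the $n=2$ specialization of Theorem~\ref{rel-slice-genus} is exactly what the paper does: its proof opens with ``Apply Theorem~\ref{rel-slice-genus}'' and the relative invariants are, by definition, the $n=2$ instances of $\widetilde{tb}$ and $\widetilde{r}$.

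For part~(c) you and the paper diverge. The paper does not reduce to (b)(ii) via a Seiberg--Witten non-vanishing argument; instead it re-runs the construction from the proof of Theorem~\ref{rel-slice-genus} with a modification tailored to the concave/convex asymmetry: one attaches a \emph{concave} symplectic Weinstein 2-handle along $K_1\subset M_1$ and a \emph{convex} one along $K_2\subset M_2$ (using the concave-filling handle model discussed in Section~6), blows up only the core of the convex handle $\widetilde{reltb}(K_1,K_2,F)-2$ times, and then finishes as in the theorem. This keeps the entire argument inside the symplectic category and sidesteps the need for any new non-vanishing input.

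Your route would work only if $sw_{(W,\xi_1\cup\xi_2)}(\mathfrak{s}_\omega,h^\omega_1,h^\omega_2)\neq 0$ can actually be established for a symplectic \emph{cobordism}. The Kronheimer--Mrowka non-vanishing is proved for weak fillings (all ends convex), and the ``extension of the Mrowka--Rollin approach'' alluded to after Theorem~\ref{rel-slice-genus} refers to the disconnected-boundary \emph{filling} case, not to a concave end. Stretching the neck along the convex end alone does not obviously handle the concave end, where the almost-K\"ahler cone attached in the Kronheimer--Mrowka construction points the wrong way relative to $\omega$. So your sketch is hiding a genuine piece of analysis that is neither carried out in the paper nor standard; the paper's concave-handle construction avoids this issue entirely.
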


\begin{remark} A special case of Corollary \ref{cobrelsliceg} is the Lagrangian concordance of two Legendrian knots in the symplectization of a contact manifold (see \cite{Ch}). Conjecture 7.4 of \cite{Ch} suggests an explicit formula for the difference of the Thurston-Bennequin numbers of two Legendrian knots connected via an immersed Lagrangian cylinder in the symplectization of a contact 3-manifold. This formula is related to the discussion in Remark \ref{remark1}.
\end{remark}

\section{Acknowledgements} 

The author would like to thank Matt Hedden for many helpful discussions and suggestions, in particular, for pointing out a mistake in an earlier draft of the paper, and for sharing Theorem \ref{reladjunction}. Thanks are due to Danny Ruberman and Yanki Lekili for helpful conversations, and especially to Hao Wu and the George Washington University math department for a warm welcome during a brief visit in the summer of 2008. The travel was generously funded by Franklin W. Olin College of Engineering.

\section{Background}

Consider a closed oriented 3-manifold $M$ and a (transversely) oriented 2-plane distribution $\xi$ on $M$ with a (global) 1-form $\alpha$ on $M$ such that $\xi=\ker\alpha$. In this case, $\xi$ is called a {\em contact structure} on $M$ and $\xi$ is {\em positive} if $\alpha\wedge d\alpha>0$. A knot $K\subset (M,\xi)$ is {\em Legendrian} if it is everywhere tangent to $\xi$. Given a Seifert surface $\Sigma\subset M$ for $K$, we can define two (classical) invariants of $K$, the {\em Thurston-Bennequin number} and the {\em rotation number}. The Thurston-Bennequin number is given by
$$tb_\Sigma(K):=tw_K(\xi,Fr_\Sigma)$$
where $tw_K(\xi,Fr_\Sigma)$ counts the number of $2\pi$-twists of the contact framing relative to the Seifert framing along $K$. Equivalently, $tb_\Sigma (K)$ can be defined as $lk(K',K)=K'\cdot\Sigma$ for an (oriented) push-off $K'$ of $K$ in the direction normal to the contact planes. The Thurston-Bennequin number depends on $\Sigma$ only through its relative homology class $[\Sigma]\in H_2(M;\Z)$. The rotation number is defined to be
$$r_\Sigma(K):=c_1(\xi,u)([\Sigma]),$$
where $u$ is the positive unit vector field along $K$. This computes the relative first Chern class of the trivial $\xi$ over $\Sigma$ (in this case, this is equal to the relative Euler class of $\xi$), restricted to $\partial\Sigma=K$, with the nonzero section given by $u$.

In \cite{MR}, Mrowka and Rollin generalized the definition of the classical invariants of Legendrian knots in the following way. Let $W$ be an oriented 4-manifold with connected boundary $\partial W=M$, where $(M,\xi)$ is a contact 3-manifold, and let $K\subset M$ be a Legendrian knot which is the boundary of an embedded surface $F$ in $W$. Consider a vector field transverse to $\xi$ and extend this vector field to all of $W$ with flow $\{\varphi_t\}$. Then take a push-off $K'=\varphi_\epsilon(K)$ for a small $\epsilon>0$ and let $F'=\varphi_\epsilon(F)$. Since $\partial F$ and $\partial F'$ are disjoint, the intersection number $F'\cdot F$ is well-defined and is called the \textit{Thurston-Bennequin number} of $K$ relative to $F$,
$$tb(K,F):=F'\cdot F.$$
It only depends on $F$ through its relative homology class $[F]\in H_2(W,K;\Z)$. Here, $K$ is oriented as the boundary of the oriented $F$, similarly for $K'$ and $F'$. In the case when $F\subset M$, $tb(K,F)$ coincides with the classical definition $tb_F(K)=K'\cdot F=lk(K',K)=tw_{K}(\xi,Fr_F)$ (for a given homology class $[F]\in H_2(W,K;\Z)$). Now, for $\s\in Spin^{\mathbb{C}}(W)$ there exists an isomorphism $h:\s\rvert_{M}\rightarrow \t_\xi$, where $\t_\xi$ is the canonical $Spin^{\mathbb{C}}$-structure on $M$ associated to $\xi$. With the choice of a complex structure on $\xi$, the determinant line bundle $\det(\t_\xi)$ is canonically isomorpic to $\xi$ as a complex line bundle over $M$, and $h$ induces an isomorphism $\det(h):\det(\s)\rvert_{M}\rightarrow\xi$ (see \cite {KM} or Ch.6 in \cite{OSt}). Let $u$ be the positive unit tangent vector field along $K$. Since $K$ is Legendrian, $u$ gives us a nonzero section of $\xi$ and trivializes it over $K$, therefore, we get a nonzero section of $\det(\s)\rvert_M$. Then define \textit{the rotation number} of $K$ relative to $F$ to be 
$$r(K,F,\s,h):=c_1(\det(\s),\det(h)^{-1}(u))([F]),$$
where $c_1(\det(\s),\det(h)^{-1}(u))$ is the first Chern class of $\det(\s)$ relative to the trivialization induced by $\det(h)^{-1}(u)$. Note that $r(K,F,\s,h)$ depends on $F$ only through its relative homology class $[F]\in H_2(W,K;\Z)$ and on the isomorphism type of the pair $(\s,h)$ in $Spin^{\mathbb{C}}(W,\xi)$ (see \cite{KM}). In the special case when $F\subset (M,\xi)$, $r$ is independent of $(\s,h)$ and gives us the classical definition of the rotation number $r_F(K)=c_1(\xi,u)([F])$, where $c_1(\xi,u)$ is the relative Chern class with respect to the trivialization of $\xi$ along $K$ induced by $u$. In the case when $W$ is a symplectic manifold with symplectic form $\omega$, such that $(W,\omega)$ is a weak symplectic filling of $(M,\xi)$, that is, $\omega\rvert_\xi>0$, the symplectic form determines a canonical $Spin^{\mathbb{C}}$-structure $\s_\omega$ on $W$ and a canonical (up to homotopy) isomorphism $h_\omega:\s_\omega\rvert_M\rightarrow\t_\xi$. In this case, we have for the rotation number of $K$
$$r(K,F,\omega):=r(K,F,\s_\omega,h_\omega).$$

Mrowka-Rollin proved the following theorem.

\begin{theorem}[\cite{MR}]\label{MRslice-genus}
Let $(M,\xi)$ be a 3-dimensional closed contact manifold and $W$ be a compact 4-dimensional manifold with boundary $M$. Suppose we have a Legendrian knot $K\subset Y$, and $\Sigma\subset W$ a connected orientable compact surface with boundary $\partial\Sigma=K$. Then for every relative $Spin^{\mathbb{C}}$-structure $(\s,h)$ with $sw_{(M,\xi)}(\s,h)\neq 0$, we have $$\chi(\Sigma)+tb(K,\sigma)+\rvert r(K,\sigma,\s,h)\rvert\leq 0,$$ where $\chi$ denotes the Euler characteristic.
\end{theorem}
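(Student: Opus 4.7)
The plan is to reduce the relative inequality to a Seiberg--Witten adjunction inequality for a closed embedded surface, via the Akbulut--Matveyev / Lisca--Mati\'c trick of capping $K$ with the core of a Weinstein 2-handle, adapted to the relative $Spin^\C$-setting used by Mrowka--Rollin.

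First, I would attach a Weinstein 2-handle $H$ to $W$ along the Legendrian knot $K$ with framing $tb(K,\Sigma)-1$ (the contact framing). By Eliashberg's theorem, $\xi$ extends over $\partial H$ to a contact structure on the new boundary $(M',\xi')$, and any compatible symplectic collar extends across $H$. This lets the relative $Spin^\C$-structure $(\s,h)$ and the non-vanishing monopole class detected by $sw_{(M,\xi)}(\s,h)\ne 0$ lift to the enlarged cobordism $W^+=W\cup H$. Capping $\Sigma$ off with the core disk $D\subset H$ produces a closed embedded oriented surface $\hat\Sigma=\Sigma\cup_K D\subset W^+$ with $\chi(\hat\Sigma)=\chi(\Sigma)+1$.

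Second, I would read off the two topological data of $\hat\Sigma$ from the framing of the 2-handle and the canonical trivialization on the core disk. Writing $\s^+$ for the extended $Spin^\C$-structure, a direct computation of linking numbers and relative Chern classes gives
$$[\hat\Sigma]\cdot[\hat\Sigma] \;=\; tb(K,\Sigma)-1, \qquad \bigl\langle c_1(\det \s^+),[\hat\Sigma]\bigr\rangle \;=\; r(K,\Sigma,\s,h)\pm 1,$$
where the sign depends only on the orientation of $K$. Here the $-1$ in the self-intersection reflects the fact that the Weinstein framing is one less than the contact push-off, and the $\pm 1$ in the Chern evaluation comes from the canonical $Spin^\C$-trivialization on the 2-handle.

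Third, I would apply the Seiberg--Witten adjunction inequality to $\hat\Sigma\subset W^+$, using that the non-vanishing of $sw_{(M',\xi')}(\s^+,h^+)$ persists after the handle attachment (passing if necessary to a closed 4-manifold by a symplectic cap on $(M',\xi')$, e.g.\ via Etnyre--Honda). This yields
$$[\hat\Sigma]^2+\bigl|\langle c_1(\det\s^+),[\hat\Sigma]\rangle\bigr| \;\le\; -\chi(\hat\Sigma).$$
Substituting the identities above and applying the argument also to $-K$ (to absorb the sign ambiguity in the $\pm 1$ into the absolute value on $r$) produces
$$\chi(\Sigma)+tb(K,\Sigma)+\bigl|r(K,\Sigma,\s,h)\bigr|\;\le\;0,$$
as desired.

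The main obstacle is precisely the bookkeeping that the non-vanishing Seiberg--Witten hypothesis survives the Weinstein handle attachment (and any symplectic capping) and that the $\pm 1$ corrections on the self-intersection and Chern evaluation conspire correctly with the absolute value to absorb the sign of $r$; this is the content of \cite{MR}, and indeed they circumvent the capping step altogether by working directly with Seiberg--Witten solutions under contact boundary conditions on $W$.
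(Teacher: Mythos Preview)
This theorem is quoted from \cite{MR} as background and is not proved in the paper; the paper does, however, prove its generalization (Theorem~\ref{rel-slice-genus}) by exactly the Weinstein-handle/cap-off/adjunction template you describe, so your overall plan matches the paper's method and is the right one.

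Two concrete points need fixing. First, your Chern-class identity is off: after capping with the core disk one has $\langle c_1(\det\s^+),[\hat\Sigma]\rangle = r(K,\Sigma,\s,h)$ exactly, with \emph{no} $\pm 1$ correction. The canonical $Spin^\C$-extension over the Weinstein handle is arranged so that the positive unit tangent along $K$ extends as a nonvanishing section of $\det(\s^+)$ over the core disk; this is precisely the computation recorded in the paper's proof of Theorem~\ref{rel-slice-genus}(a)(i), where $c_1(\widetilde{\s})([\widetilde{F}])=r$ and the self-intersection alone picks up the $-1$. With your extra $\pm 1$ the final arithmetic does not close up to the stated bound, and the orientation-reversal trick cannot repair it.

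Second, you invoke adjunction on $\hat\Sigma$ directly, but the version available here (Proposition~\ref{KMr}, or the closed-manifold version after capping) needs $[\hat\Sigma]^2\ge 0$ and $g(\hat\Sigma)\ge 1$, neither of which you have arranged: $tb-1$ may well be negative and $\hat\Sigma$ may be a sphere. The paper handles this with two preliminary moves you omit: connect-sum $K$ with Legendrian trefoils (Lemma~\ref{adjust}) to force $tb\ge 1$ and $\chi(\Sigma)\le -1$ without changing $tb+|r|+\chi$, and then blow up $tb-1$ points on the core to bring the self-intersection to zero (shifting the Chern evaluation to $r+tb-1$). Only then does adjunction give $r+tb-1\le -(\chi+1)$, and repeating with $-K$ flips the sign of $r$ to produce the absolute value.
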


H. Wu proved a reformulation of the Mrowka-Rollin theorem in the language of Heegaard Floer homology.

\begin{theorem}[\cite{Wu1}]\label{slice-genus}
Let $W$ be an oriented 4-manifold with connected boundary $\partial W=M$, $\xi$ a contact structure on $M$, and $K$ a Legendrian knot in $(M,\xi)$.
\begin{enumerate} [(a)] 
\item If there is a $Spin^{\mathbb{C}}$-structure $\s$ on $W$ with $F^+_{W\setminus B,\s\rvert_{W\setminus B}}(c^+(\xi))\neq0$, where $B$ is an embedded 4-ball in the interior of $W$, then there is an isomorphism $h:\s\rvert_M\rightarrow \t_\xi$ such that, for any embedded surface $F$ in $W$ bounded by $K$, $$tb(K,F)+\rvert r(K,F,\s,h)\rvert\leq-\chi(F).$$
\item If $(W,\omega)$ is a weak symplectic filling of $(M,\xi)$, then for any embedded surface $F$ in $W$ bounded by $K$, $$tb(K,F)+\rvert r(K,F,\mathfrak{s}_\omega,h_\omega)\rvert\leq-\chi(F).$$
\end{enumerate}
\end{theorem}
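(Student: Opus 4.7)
The plan is to derive Corollary \ref{cobrelsliceg} as a direct specialization of Theorem \ref{rel-slice-genus} to the case $n=2$, together with the dictionary
$$\widetilde{reltb}(K_1,K_2,F)=\widetilde{tb}(K_1,K_2,F),\qquad \widetilde{relr}(K_1,K_2,F,\s,h_1,h_2)=\widetilde{r}(K_1,K_2,F,\s,h_1,h_2)$$
that is built into the definitions in Section~2. Since no new 4-dimensional input is needed beyond what Theorem \ref{rel-slice-genus} already provides, the work is almost entirely bookkeeping: check that the boundary-orientation convention $\partial W=-M_1\cup M_2$ used for the relative setup matches the convention in the general theorem, and that the hypotheses in (a)(i), (a)(ii), (b)(i), (b)(ii), and (c) of the corollary are word-for-word the $n=2$ versions of those in the theorem.

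For part (a), I would take $W'$ obtained by attaching 1-handles away from $F$ so that the resulting boundary $(M,\xi)$ is connected with $\xi|_{M_i}=\xi_i$; this is the same construction allowed in Theorem \ref{rel-slice-genus}(a). The hypothesis $F^+_{W'\setminus B,\s|_{W'\setminus B}}(c^+(\xi_1)\otimes c^+(\xi_2))\neq 0$ is the $n=2$ instance of the hypothesis appearing in Theorem \ref{rel-slice-genus}(a)(i), using that the contact invariant of a disjoint union is the tensor product of contact invariants under the K\"unneth splitting of $HF^+$; this identification is the one place I would pause to cite cleanly. The Seiberg--Witten variant (a)(ii) requires no such identification and is immediate. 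In both cases, invoking the theorem yields the desired inequality.

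For part (b), the situation is identical except that one works on $W$ itself with disconnected boundary; the hat-version adjunction of Hedden (Theorem \ref{reladjunction} in the paper's references) is what makes Theorem \ref{rel-slice-genus}(b)(i) available with disconnected boundary, and again I would note the K\"unneth identification $\widehat{HF}(M_1\sqcup M_2)\cong \widehat{HF}(M_1)\otimes\widehat{HF}(M_2)$ sending $c(\xi_1\sqcup\xi_2)$ to $c(\xi_1)\otimes c(\xi_2)$. Part (b)(ii) follows by specialization of the SW version. Part (c) is the specialization of Theorem \ref{rel-slice-genus}(c) to symplectic cobordisms: the conditions $\omega|_{\xi_1}<0$ and $\omega|_{\xi_2}>0$ are exactly the conditions making $(W,\omega)$ a weak symplectic filling of $(-M_1,\xi_1)\cup(M_2,\xi_2)$, which produces the canonical $(\s_\omega,h^\omega_1,h^\omega_2)$ demanded by the theorem.

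The only genuinely nontrivial issue is the sign bookkeeping. In the relative setup, the vector field $v$ used to define the push-off $F'$ points along the negative normal of $\xi_1$ on the concave end $M_1$ and along the positive normal of $\xi_2$ on the convex end $M_2$. I would verify that this sign convention is consistent with the one used in Theorem \ref{rel-slice-genus} under the identification $\partial W=-M_1\cup M_2$, so that $F'\cdot F=\widetilde{reltb}(K_1,K_2,F)$ agrees with the theorem's $\widetilde{tb}$; the Seifert-surface computation $\widetilde{reltb}(K_1,K_2,F)=tb_{\S_2}(K_2)-tb_{\S_1}(K_1)$ of Remark \ref{remark1} serves as the sanity check. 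I expect this sign check, and the accompanying sign check for the relative rotation number via $c_1(\det(\s),\det(h_1)^{-1}(u_{K_1})\cup\det(h_2)^{-1}(u_{K_2}))$, to be the only steps demanding care; once those are settled, the corollary is read off from Theorem \ref{rel-slice-genus}.
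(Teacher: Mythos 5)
Your proposal does not prove the statement at hand. The statement is Theorem \ref{slice-genus} (Wu's slicing Thurston--Bennequin inequality for a single Legendrian knot $K$ in a \emph{connected} contact boundary $(M,\xi)$ of $W$), but your entire argument is a derivation of Corollary \ref{cobrelsliceg} from Theorem \ref{rel-slice-genus}: you specialize the relative theorem to $n=2$, match hypotheses, and check sign conventions. That is a different result, and moreover the logical direction is backwards for the purpose of proving Theorem \ref{slice-genus}: in the paper, Theorem \ref{rel-slice-genus}(a) is proved as ``a direct application of Theorem \ref{slice-genus}'' (and (b) by the same circle of techniques), so any attempt to recover Theorem \ref{slice-genus} by specializing Theorem \ref{rel-slice-genus} to one boundary component would be circular.

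What a proof of Theorem \ref{slice-genus} actually requires is the 4-dimensional Heegaard Floer argument of \cite{Wu1}, which the paper recalls inside the proof of Theorem \ref{rel-slice-genus}(a)(i) and which none of your steps supply: first reduce, via the stabilization Lemma \ref{adjust} (connect summing with right-handed trefoils in a Darboux ball), to the case $tb(K,F)\geq 1$ and $\chi(F)\leq -1$; attach a Weinstein 2-handle along $K$ (Legendrian surgery), giving a symplectic cobordism $V$ from $(M,\xi)$ to $(M',\xi')$ with $F^+_{V,\s_\omega}(c^+(\xi'))=c^+(\xi)$ (Proposition \ref{invariant}); cap $F$ with the core to get a closed surface $\widetilde{F}$ with $[\widetilde{F}]^2=tb(K,F)-1$ and $\chi(\widetilde{F})=\chi(F)+1$; use the composition law (Theorem \ref{composition}) and the hypothesis $F^+_{W\setminus B,\s\rvert_{W\setminus B}}(c^+(\xi))\neq 0$ to produce $\widetilde{\s}$ on $\widetilde{W}=W\cup_M V$ with nonvanishing induced map; blow up $tb(K,F)-1$ points on the core (Theorem \ref{blow-up}) so that $[\widehat{F}]^2=0$; split $\widehat{W}\setminus\widehat{B}$ along $\partial U\cong \widehat{F}\times S^1$ to conclude that a twisted $\underline{HF}^+$ of $\widehat{F}\times S^1$ is nonzero; apply the adjunction inequality (Theorem \ref{adjunction}) to get $tb(K,F)+r(K,F,\s,h)\leq-\chi(F)$, and repeat with reversed orientation to obtain the absolute value. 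Part (b) follows the same scheme, replacing the nonvanishing hypothesis by Theorem \ref{non-zero} with $[\omega]$-twisted coefficients. Since your proposal contains none of this --- no surgery, no capping, no blow-up, no adjunction step --- it has no bearing on the statement you were asked to prove.
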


\section{Legendrian Surgery and Symplectic 2-handles}

Weinstein gives a standard symplectic 2-handle model for Legendrian surgery along a Legendrian knot $K$ in a contact 3-manifold (see \cite{Ga1, We}). In particular, if $(W,\omega)$ is a symplectic 4-manifold with contact boundary $(M,\xi)$ and $\omega\rvert_\xi>0$, then performing Legendrian surgery on $K$ gives us a 4-manifold $(W',\omega')$ with contact boundary $(M',\xi')$, where $W$ is obtained from $W$ by attaching a standard symplectic 2-handle to $M=\partial W$, so that $\omega$, $\xi$ extend in a canonical way over the 2-handle to $\omega'$, $\xi'$ and $\omega'\rvert_{\xi'}>0$. Moreover, this construction extends a non-vanishing symplectic vector field near the boundary of $W$ to a non-vanishing symplectic vector field over the symplectic 2-handle. In particular, this gives a convex filling $(W',\omega')$ to the manifold $(M',\xi')$. 

Analogous construction exists in the case of Legendrian surgery along a $K$ in $(M,\xi)$ with a concave filling $(W,\omega)$, that is, $\omega\rvert_\xi<0$ (\cite {Ga2,Ga3}). The attached symplectic 2-handle provides analogous extensions of $\omega$, $\xi$, and a non-vanishing symplectic vector field defined near the boundary of $W$. So the new symplectic manifold $(W',\omega')$ is a concave filling of the new 3-manifold $(M',\xi')$ obtained from the Legendrian surgery on $K$.

\begin{remark} 
When $K$ is a slice knot, i.e., when there exists an embedded surface $\Sigma$ in the convex or concave filling $(W,\omega)$ with $\partial\Sigma=K$, a symplectic 2-handle attachment caps off $\Sigma$ with the core of the 2-handle to give an embedded closed surface $\Sigma'\subset(W',\omega')$ satisfying $\chi(\Sigma')=\chi(\Sigma)+1$.
\end{remark}

\section{Concave filling}

Now consider a closed oriented $3$-manifold $M$ (not necessarily connected), and $\xi$ an oriented contact structure compatible with the orientation of $M$. Let $M$ be the oriented boundary of an oriented $4$-manifold of $W$, then recall that a symplectic form $\omega$ on $W$ is weakly compatible with $\xi$ if the restriction $\omega|_{M}$ is positive on the $2$-plane field $\xi$; or equivalently, if $\alpha\wedge \omega|_{M} >0$. The following is proved in \cite{Eliashberg}.

\begin{theorem}\label{thm:Eliash}{\cite{Eliashberg}} Let $M$ be the oriented boundary of a $4$--manifold $W$ and let $\omega$ be a symplectic form on $W$. Suppose there is a contact structure $\xi$ on $M$ that is compatible with the orientation of $M$ and weakly compatible with $\omega$. Then we can embed $W$ in a closed symplectic $4$-manifold $(X,\Omega)$ in such a way that $\Omega|_{W} = \omega$.
\end{theorem}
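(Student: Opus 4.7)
The plan is to construct a symplectic \textit{cap} for $(M,\xi)$: a compact symplectic $4$-manifold $(C,\omega_C)$ with oriented boundary $-M$ whose symplectic form matches $\omega$ on a collar of $M$. Gluing $(W,\omega)$ to $(C,\omega_C)$ along $M$ via a symplectic-collar identification then produces the desired closed symplectic manifold $(X,\Omega)$ with $\Omega|_W=\omega$. The heart of the argument is therefore the construction of such a $C$ and the matching of the two forms along $M$.

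First, I would exploit weak compatibility to put $\omega$ into a convenient normal form near $\partial W$. Pick a contact form $\alpha$ with $\ker\alpha=\xi$. Since $\omega|_\xi>0$, the two-form $\omega|_M+d(t\alpha)$ is symplectic on a thin collar $M\times[-\epsilon,0]$ for all sufficiently small $|t|$, and a Moser-type interpolation lets one deform $\omega$ inside the collar (while keeping it fixed on a still-smaller collar) so that near $M$ it agrees with $\omega|_M+d(t\alpha)$. This is the technical step where the weak-compatibility hypothesis enters essentially.

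Second, I would build the cap using the Giroux correspondence. Choose an open book decomposition $(B,\pi)$ of $M$ supporting $\xi$; after enough positive stabilizations its monodromy is a product of positive Dehn twists along non-separating curves on the page $\Sigma$. From this data one constructs a symplectic $4$-manifold $C$ as a page-bundle over a disk with Weinstein $2$-handles attached along the Dehn-twist curves, oriented so that $(M,\xi)$ appears as the \textit{concave} boundary (a Thurston--Winkelnkemper-type construction in the concave setting). The free parameters --- fiber area, collar width, and the number of extra stabilizations --- are then used to tune $\omega_C|_M$ so that its cohomology class equals $[\omega|_M]\in H^2(M;\R)$ and its contact-planes restriction matches $\omega|_\xi$.

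Finally, the symplectic neighborhood theorem lets one identify thin collars of $M\subset W$ and $-M\subset C$ on which the two symplectic forms agree, and the glued form $\Omega$ is smooth and symplectic on $X=W\cup_M C$. The main obstacle is the cohomological matching in the third step: the cap must realize \emph{the given class} $[\omega|_M]$ rather than some fixed class built into a rigid model, and ensuring this requires the flexibility provided by the positive stabilizations and by adding auxiliary symplectic $2$-handles before closing up the fibration. Nondegeneracy of $\Omega$ on the overlap region is then automatic from the collar normal form established in the first step.
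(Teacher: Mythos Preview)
Your overall plan---normalize $\omega$ on a collar via a Moser argument, build a symplectic cap for $(M,\xi)$ from a supporting open book, and glue---is precisely Eliashberg's strategy as sketched in the paper, and your first and last steps (including the cohomological matching of $[\omega|_M]$ needed in the weak case) are correctly identified.

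The gap is in your second step. The assertion that ``after enough positive stabilizations the monodromy is a product of positive Dehn twists'' is false: an open book whose monodromy factors into positive Dehn twists supports a Stein fillable contact structure, and positive stabilization preserves the supported contact structure, so your claim would force every contact $3$-manifold to be Stein fillable---contradicted already by weakly fillable examples that are not Stein fillable. Correspondingly, the object you go on to describe (a page-bundle over the disk with Weinstein $2$-handles attached along the twist curves) is a Stein domain with $(M,\xi)$ as its \emph{convex} boundary, not a concave cap; reversing orientation does not convert one into the other, so no cap is produced.

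Eliashberg's construction, as the paper recounts, avoids this obstruction. One first attaches a symplectic $2$-handle along the (connected) binding $B_i$ with page framing, turning each boundary component into a $\Sigma_i$-bundle $M_i'\to S^1$; then each $M_i'$ is capped by a symplectic Lefschetz fibration $Z_i\to B_i$ over a surface-with-boundary $B_i$ (typically of positive genus) with the same fiber $\Sigma_i$ and with $\partial Z_i=-M_i'$. Allowing the base to have genus is exactly what supplies the flexibility to realize an \emph{arbitrary} boundary monodromy, replacing the unavailable positivity hypothesis in your argument.
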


In \cite{Eliashberg} $X$ is constructed as a smooth manifold as follows. If the
components of $M$ are $M_{1},\dots,M_{n}$, then choose an open-book decomposition of each $M_{i}$ with binding $B_{i}$. These open-book decompositions are compatible with the contact structures $\xi|_{M_{i}}$ in the sense of \cite{Giroux}. Take each binding $B_{i}$ to be connected. Let $W'$ be obtained from $W$ by attaching a $2$-handle along each knot $B_{i}$ with zero framing. The boundary $M' = \partial W'$ is the union of $3$-manifolds $M_{i}'$, obtained from $M_{i}$ by zero surgery: each $M_{i}'$ fibers over the circle with typical fiber $S_{i}$. The genus of $S_{i}$ is the genus of the leaves of the open-book decomposition of $M_{i}$. For each $i$, one then constructs a symplectic Lefschetz fibration $p_{i} :  Z_{i} \rightarrow B_{i}$ over a $2$--manifold-with-boundary $B_{i}$, with $\partial B_{i} =S^{1}$. One constructs $Z_{i}$ to have
the same fiber $S_{i}$, and $\partial Z_{i} = - M'_{i}$. The $4$-manifold $X$ is obtained as the union of $W'$ and the $Z_{i}$, joined along their common boundaries $M_{i}'$.

\section{Heegaard-Floer Homology} 

\subsection{Heegaard-Floer homology: $\widehat{HF}$ and $HF^+$.} Given a connected closed 3-manifold $M$ with $Spin^\C$-structure $\t$, Ozsv\'ath-Szab\'o defined a collection of Heegaard-Floer groups associated to $(M,\t)$ (\cite{OS4}), which are $\Z[U]$-modules. We will use the groups $\widehat{HF}(M,\t)$ and $HF^+(M,\t)$. For a $\Z[H^1(M)]$-module $\M$, the $\M$-twisted Heegaard-Floer homologies $\underline{HF}^+(M,\t;\M)$ and $\underline{\widehat{HF}}(M,\t;\Z)$ are $\Z[U]\otimes\Z[H^1(M)]$-modules (see \cite{OS5}). A $\Z[H^1(M)]$-module homomorphism $\theta:\M_1\rightarrow\M_2$ induces homomorphisms $\widehat{\Theta}:\underline{\widehat{HF}}(M,\t;\M_1)\rightarrow\underline{\widehat{HF}}(M,\t;\M_2)$ and $\Theta^+:\underline{HF}^+(M,\t;\M_1)\rightarrow\underline{HF}^+(M,\t;\M_2)$ (see \cite{OS5}). We can consider $\Z$ as a $\Z[H^1(M)]$ module and obtain the Heegaard-Floer homology $\widehat{HF}(M,\t)$ (resp., $HF^+(M,\t)$) defined with an appropriate orientation system as a special case of the twisted Heegaard Floer homology $\underline{\widehat{HF}}(M,\t;\Z)$ (resp., $\underline{HF}^+(M,\t;\Z)$) (see \cite{OS2,OS3}).

Moreover, for $[\omega]\in H^2(M;\R)$, Ozsv\'ath-Szab\'o defined the Heegaard-Floer homologies $\underline{\widehat{HF}}(M,\t;[\omega])$ and $\underline{HF}^+(M,\t;[\omega])$ twisted by $[\omega]$. Consider the polynomial ring 
$$\Z[\R]=\Bigg{\{}\sum_{i=1}^kc_iT^{s_i}\rvert k\in\Z_{\geq 0},c_i\in\Z,s_i\in\R\Bigg{\}}.$$
Take a cohomology class $[\omega]\in H^2(M;\R)$. For $[\nu]\in H^1(M)$, there is an action $e^{[\nu]}\cdot T^{s}=T^{s+\int_M\nu\wedge\omega}$ which gives $\Z[\R]$ a $\Z[H^1(M)]$-module structure. We denote this module by $\Z[\R]_{[\omega]}$ and the Heegaard-Floer homologies of $M$ twisted by $[\omega]$ by $\underline{\widehat{HF}}(M,\t;\Z[\R]_{[\omega]})$ and  $\underline{HF}^+(M,\t;\Z[\R]_{[\omega]})$. In the case of a disjoint collection $(M_1,\t_1),\dots,(M_n,\t_n)$ of 3-manifolds with a $Spin^\C$-structure, choosing a cohomology class $[\omega_i]\in H^2(M_i;\R)$ on each $M_i$, we get a cohomology class $[\omega_1]\oplus\cdots\oplus[\omega_n]\in H^2(M_1,\R)\oplus\cdots\oplus H^2(M_n,\R)$ and a generalized action given by 
$$e^{[\nu_1]\oplus\cdots\oplus[\nu_n]}\cdot T^s=T^{s+\int_{M_1\cup\cdots\cup M_n}\sum_{i,j}\nu_i\wedge\omega_j}$$
for $[\nu_i]\in H_1(M_i;\R)$ can be defined on 

$$\underline{\widehat{HF}}(M_1,\t_1)\otimes\cdots\otimes\underline{\widehat{HF}}(M_n,\t_n).$$
In a suitable context, we can obtain a twisted Heegaard-Floer homology product given by 
$$\underline{\widehat{HF}}(M_1\cup\cdots\cup M_n,\t_1\otimes\cdots\otimes\t_n;\Z[\R]_{[\omega_1\oplus\cdots\oplus\omega_n]}).$$ 

\subsection{Adjunction Inequalities from Heegaard Floer}

H. Wu \cite{Wu1} observes that the following theorem of Ozsv\'ath-Szab\'o applies in the twisted case.

\begin{theorem}\cite{OS}\label{adjunction} 
Let $\Sigma\hookrightarrow M$ be a closed oriented surface with $g(\Sigma)\geq1$ and $\t$ a $Spin^\C$-structure on $M$ with $\underline{HF}^+(M,\t;\M)\neq 0$, then 
$$[\Sigma]^2+|\langle c_1(\t),[\Sigma]\rangle|\leq-\chi(\Sigma),$$
in particular, 
$$|\langle c_1(\t),[\Sigma]\rangle|\leq-\chi(\Sigma).$$
\end{theorem}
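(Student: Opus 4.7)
The plan is to follow Ozsv\'ath--Szab\'o's original Heegaard-diagram proof of the adjunction inequality for $HF^+$ and observe that it passes through essentially unchanged in the $\M$-twisted setting, as noted by Wu in \cite{Wu1}. The key structural point is that twisting by a $\Z[H^1(M)]$-module $\M$ affects only the coefficients of the differential in the Heegaard--Floer chain complex, not the generating set; hence the same combinatorial vanishing criterion controls both the untwisted theory and the $\M$-twisted theory.

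The technical heart of the argument is a combinatorial bound on the first Chern class at each intersection point. First I would choose a Heegaard diagram for $M$ that is \emph{subordinate} to $\Sigma$: after sufficient stabilization of an arbitrary Heegaard splitting, one can isotope $\Sigma$ so that its intersection with each handlebody consists of compressing disks parallel to the attaching curves, and $\Sigma$ is reconstructed by capping off a subsurface of the Heegaard surface with these disks. For each intersection point $x\in\mathbb{T}_\alpha\cap\mathbb{T}_\beta$, the Poincar\'e dual of $c_1(\s_z(x))$ admits a standard combinatorial expression in terms of the region multiplicities of the diagram, and a direct count against the subordinate surface yields
$$|\langle c_1(\s_z(x)),[\Sigma]\rangle|\leq 2g(\Sigma)-2=-\chi(\Sigma).$$
Consequently, if the claimed inequality failed for some $\t$, then no intersection point $x$ would satisfy $\s_z(x)=\t$, so the chain complex in the $\t$-summand would vanish identically.

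For the $\M$-twisted theory, the underlying chain group is still freely generated over $\M$ by the intersection points $x$ with $\s_z(x)=\t$, with $\M$ entering only through the coefficients of the boundary map; hence the vanishing above forces $\underline{HF}^+(M,\t;\M)=0$, contradicting the hypothesis. Finally, for a surface embedded in a 3-manifold the self-intersection $[\Sigma]^2$ vanishes trivially, so the displayed inequality in the theorem reduces to its ``in particular'' clause, which is the genuine content. The main obstacle will be the combinatorial bound on $\langle c_1(\s_z(x)),[\Sigma]\rangle$: making precise the sense in which $\Sigma$ is subordinate to the diagram, and carrying out the Poincar\'e-dual computation at each intersection point. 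Once this untwisted bound is in hand, the extension to the $\M$-twisted setting is a formal consequence of the fact that twisting does not alter the generating set.
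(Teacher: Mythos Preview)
The paper does not prove this theorem; it is quoted from Ozsv\'ath--Szab\'o \cite{OS} and the extension to twisted coefficients is attributed to an observation of Wu \cite{Wu1}, with no further argument given. Your proposal is a faithful sketch of the original Ozsv\'ath--Szab\'o argument---representing $\Sigma$ as a periodic domain in a suitable Heegaard diagram, bounding $\langle c_1(\s_z(x)),[\Sigma]\rangle$ combinatorially at each generator, and concluding that the chain group is zero in the offending $Spin^\C$-structure---together with the correct remark that twisting by $\M$ only alters the coefficients of the differential and not the generating set, so the same vanishing goes through; this is exactly the content the paper is invoking by citation.
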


Recently, M. Hedden has extended the adjunction inequality for closed surfaces to surfaces with boundary. In particular, he has proved the following theorem.

\begin{theorem}\cite{hedden}\label{reladjunction}
Let $W$ be a smooth four-manifold with $\partial W=-Y_2\cup Y_1$, and let $K\subset Y_2$ be a null-homologous knot. Suppose there exists $\t\in Spin^\C(W)$ such that
$$\alpha\in Im\Big{(} \widehat{F}_{W,\t}:\widehat{HF}(Y_1)\rightarrow\widehat{HF}(Y_2)\Big{)}.$$
Then
$$\rvert c_1(\t)([\Sigma]\rvert+[\Sigma]^2+2\tau_\alpha(Y_2,K)\leq 2g(\Sigma),$$
where $\iota:(\Sigma,\partial\Sigma)\subset (W,Y_2)$ is any smoothly, properly embedded surface with $\iota\rvert_{\partial\Sigma}\cong K$.
\end{theorem}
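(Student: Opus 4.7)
The plan is to reduce the relative inequality to the closed-surface adjunction inequality (Theorem \ref{adjunction}) by a large-surgery capping of $K$, extending the Ozsv\'ath--Szab\'o derivation of $|\tau(K)|\le g_4(K)$ from the canonical generator of $\widehat{HF}(S^3)$ to an arbitrary class $\alpha\in\widehat{HF}(Y_2)$.

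First, I would attach a $2$-handle to $W$ along $K\subset Y_2$ with framing $-n$, for $n$ sufficiently large, producing a cobordism $W_{-n}$ with $\partial W_{-n}=-Y_1\cup Y_2(-n)$, where $Y_2(-n)$ denotes $-n$-surgery on $K$. The core disc caps $\Sigma$ off to a closed surface $\widehat\Sigma\subset W_{-n}$ of genus $g(\Sigma)$ and self-intersection $[\widehat\Sigma]^2=[\Sigma]^2-n$. Extensions of $\t\in Spin^{\C}(W)$ to $W_{-n}$ form a $\Z$-torsor; I would label them $\t_k$, with $c_1(\t_k)([\widehat\Sigma])$ depending affinely and linearly on $k$.

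The second step invokes Ozsv\'ath--Szab\'o's large-surgery formula. For $n$ large, $\widehat{HF}(Y_2(-n),\s_k)$ is identified with an appropriate subquotient of the knot Floer complex $\widehat{CFK}(Y_2,K)$, and $\tau_\alpha(Y_2,K)$ is characterized as the threshold value of $k$ beyond which the image of $\alpha$ in that subquotient vanishes. For the critical $k=k_\alpha$ determined by $\tau_\alpha$, the composition
$$\widehat{HF}(Y_1)\xrightarrow{\widehat F_{W,\t}}\widehat{HF}(Y_2)\longrightarrow\widehat{HF}(Y_2(-n),\s_{k_\alpha})$$
is non-trivial by hypothesis, so by functoriality $\widehat F_{W_{-n},\t_{k_\alpha}}$ is non-trivial. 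Capping off the remaining boundary components via Theorem \ref{thm:Eliash} (or the Lefschetz fibration construction used in the proof of Theorem \ref{slice-genus}) then embeds $\widehat\Sigma$ in a closed symplectic $4$-manifold $X$ for which the relevant twisted $HF^+$-group is non-zero. Applying Theorem \ref{adjunction} to $\widehat\Sigma\subset X$ yields
$$|c_1(\t_{k_\alpha})([\widehat\Sigma])|+[\widehat\Sigma]^2\le 2g(\widehat\Sigma)-2.$$
Substituting the formulas from the previous paragraph and using the conjugation symmetry $\t\mapsto\bar\t$ to control the sign of $c_1(\t_{k_\alpha})([\widehat\Sigma])$, the $n$'s cancel, $k_\alpha$ manifests as $\tau_\alpha$, and one recovers the stated inequality.

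The hard part is the middle step: correctly identifying $\tau_\alpha$ as a knot-surgery threshold and verifying that the hat-level cobordism map sends $\alpha$ to a class whose image in the subquotient survives exactly at $k=k_\alpha$. One must also promote the hat-level non-vanishing of $\widehat F_{W_{-n},\t_{k_\alpha}}$ to the $HF^+$-level non-vanishing needed by Theorem \ref{adjunction}, paralleling the transitions between $\widehat{HF}$ and $HF^+$ that already appear in the proofs of Theorems \ref{MRslice-genus} and \ref{slice-genus}.
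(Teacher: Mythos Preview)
The paper does not contain a proof of this statement. Theorem~\ref{reladjunction} is quoted verbatim from Hedden's work in preparation (reference \cite{hedden}) and is used only as a black box; the sole consequence drawn from it in the paper is Corollary~\ref{adjunctionhat}, obtained by specializing to the unknot. There is therefore nothing in the paper to compare your argument against.

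That said, your outline is the expected one and tracks the Ozsv\'ath--Szab\'o proof that $\tau$ bounds the slice genus: large negative surgery on $K$, identification of the surgery groups with subquotients of the knot filtration, and the closed adjunction inequality applied to the capped surface. Two points deserve care. First, your appeal to Theorem~\ref{thm:Eliash} to close off $W_{-n}$ is misplaced: Eliashberg's embedding requires a weak symplectic filling, whereas here $Y_1$ and $Y_2(-n)$ carry no contact structures by hypothesis. The non-vanishing you need is the purely Heegaard--Floer statement that a non-trivial $\widehat F_{W_{-n},\t_{k_\alpha}}$ forces the relevant (twisted) $HF^+$ of the separating $3$-manifold to be non-zero, which is closer to how Theorem~\ref{adjunction} is invoked in the proof of Theorem~\ref{rel-slice-genus}(a)(i) than to any symplectic capping. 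Second, the passage from $\widehat{HF}$ to $HF^+$ that you flag as ``hard'' is genuinely the crux; in the $S^3$ case it is handled via the explicit structure of $HF^+(S^3)$ and the $U$-action, and the generalization to arbitrary $\alpha$ requires an argument specific to the filtration definition of $\tau_\alpha$ that you have not supplied.
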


Here, $\tau_\alpha(Y_2,K)$ denotes, in vague terms, a lower bound on the values of a knot invariant that has the form of a filtration of the chain complex $\widehat{CF}(Y,\s)$ for $\widehat{HF}(Y,\s)$. For a precise definition, consider \cite{hedden}. We are interested in the following particular corollary, in which the term $\tau_\alpha(Y_2,K)$ vanishes.

\begin{corollary}\cite{hedden}\label{adjunctionhat}
In the setup of Theorem \ref{reladjunction}, consider $K$ to be the unknot in $Y_2$. Then for any closed surface $\Sigma$ in a smooth four-manifold $W'$, consider $\Sigma\setminus D$, where $D$ is a 2-disc in $D^4$ with $K=\partial D$ and $W=W'\setminus D^4$ (take the closure), we have $2\tau_\alpha(Y_2,K)=0$
$$\rvert c_1(\t)([\Sigma\setminus D])\rvert+[\Sigma\setminus D]^2\leq 2g(\Sigma\setminus D),$$
which implies
$$\rvert c_1(\t)([\Sigma])\rvert+[\Sigma]^2\leq 2g(\Sigma),$$
since $c_1(\t)([\Sigma])=c_1(\t)([\Sigma\setminus D])$, $[\Sigma]^2=[\Sigma\setminus D]^2$, and $g(\Sigma)=g(\Sigma\setminus D)$.
\end{corollary}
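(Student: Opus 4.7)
The plan is to deduce this corollary directly from Theorem \ref{reladjunction} by specializing $K$ to the unknot and observing that each boundary-related correction term is either forced to vanish or is insensitive to capping off $\Sigma\setminus D$ by the disc $D$. The argument splits into two essentially independent ingredients: (i) the vanishing $\tau_\alpha(Y_2,K)=0$ for the unknot, and (ii) the invariance of the Chern-class pairing, the self-intersection, and the genus under the passage between $\Sigma\setminus D$ and the closed surface $\Sigma$.

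For (i), removing an open $4$-ball from the interior of $W'$ creates a new boundary component $Y_2\cong S^3$ in which $K=\partial D$ sits as an unknot. Since $\widehat{HFK}(S^3,\mathrm{unknot})$ is concentrated in a single Alexander grading, the induced knot filtration on $\widehat{CF}(S^3,\mathfrak{s}_0)$ is the trivial filtration, and every nonzero class in $\widehat{HF}(S^3)\cong\Z$ is already represented at filtration level zero. In particular $\tau_\alpha(Y_2,K)=0$ for every class $\alpha$ in the image of $\widehat{F}_{W,\mathfrak{t}}$, killing the correction term in Theorem \ref{reladjunction}.

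For (ii), since $D\subset D^4$ and $H^2(D^4;\Z)=0$, the Chern-class pairing of $\mathfrak{t}|_{D^4}$ against the relative class of $D$ vanishes, so $c_1(\mathfrak{t})([\Sigma])=c_1(\mathfrak{t})([\Sigma\setminus D])$. A small normal push-off of $D$ inside $D^4$ may be taken disjoint from $D$, giving $[\Sigma]^2=[\Sigma\setminus D]^2$, and because $D$ is a disc, $g(\Sigma)=g(\Sigma\setminus D)$. Combining (i) and (ii) with Theorem \ref{reladjunction} applied to $(W,\mathfrak{t})$ and the properly embedded surface $\Sigma\setminus D$ immediately produces both of the displayed inequalities.

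The main obstacle is (i): one must verify that $\tau_\alpha$ vanishes not just for the canonical generator of $\widehat{HF}(S^3)$ but for \emph{every} class $\alpha$ arising in the relevant image. This is essentially forced by $\widehat{HF}(S^3)\cong\Z$ together with the fact that the unknot's filtration is trivial, but it requires unpacking the definition of $\tau_\alpha$ from \cite{hedden}. Once that is in hand, the remainder of the argument is a routine bookkeeping exercise in relative homology inside the $4$-ball.
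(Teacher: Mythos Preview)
Your proposal is correct and follows exactly the route the paper takes: the paper's own justification is embedded in the statement of the corollary itself (vanishing of $\tau_\alpha$ for the unknot, then the three equalities relating $\Sigma$ and $\Sigma\setminus D$), and you have simply supplied the details behind those assertions. In particular, your explanation of why $\tau_\alpha(S^3,\mathrm{unknot})=0$ via the triviality of the knot filtration, and your relative-homology bookkeeping for $c_1$, self-intersection, and genus, are the natural elaborations of what the paper leaves implicit.
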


\subsection{Homomorphisms and the Contact Invariant.}

For a cobordism $W$ from a 3-manifold $M_1$ to another 3-manifold $M_2$, and a $Spin^C$-structure on $W$, we have homomorphisms $\widehat{F}_{W,\s}:\widehat{HF}(M_1,\s\rvert_{M_1})\rightarrow \widehat{HF}(M_2,\s\rvert_{M_2})$ and $F^+_{W,\s}:HF^+(M_1,\s\rvert_{M_1})\rightarrow HF^+(M_2,\s\rvert_{M_2})$. For a $\Z[H^1(M)]$-module $\M$ let $\M(W)=\M\otimes_{\Z[H^1(M_1)]}\Z[\delta H^1(\partial W)]$, where $\delta:H^1(\partial W)\rightarrow H^2(W,\partial W)$ is the map in the long exact sequence of the pair $(W,\partial W)$. Then $\s$ and $W$ induce a homomorphisms (see \cite{OS1})
$$\underline{\widehat{F}}_{W,\s}:\underline{\widehat{HF}}(M_1,\s\rvert_{M_1};\M)\rightarrow\underline{\widehat{HF}}(M_2,\s\rvert_{M_2};\M(W)),$$
$$\underline{F}^+_{W,\s}:\underline{HF}^+(M_1,\s\rvert_{M_1};\M)\rightarrow\underline{HF}^+(M_2,\s\rvert_{M_2};\M(W)).$$ Each of these homomorphisms represents an equivalence class $[\underline{\widehat{F}}_{W,\s}]$ (resp., $[\underline{F}^+_{W,\s}]$) which is well-defined for a given $(W,\s)$. Similarly, for $[\omega]\in H^2(M;\R)$, we get an equivalence class $[\underline{\widehat{F}}_{W,\s;[\omega]}]$ (resp., $[\underline{F}^+_{W,\s;[\omega]}]$) where 
$$\underline{\widehat{F}}_{W,\s}:\underline{\widehat{HF}}(M_1,\s\rvert_{M_1};[\omega\rvert_{M_1}])\rightarrow\underline{\widehat{HF}}(M_2,\s\rvert_{M_2};[\omega\rvert_{M_2}]),$$
$$\underline{F}^+_{W,\s}:\underline{HF}^+(M_1,\s\rvert_{M_1};[\omega\rvert_{M_1}])\rightarrow\underline{HF}^+(M_2,\s\rvert_{M_2};[\omega\rvert_{M_2}]).$$ These homomorphisms have the following composition properties. 

Let $F^\circ,HF^\circ$ stand for $\widehat{F},\widehat{HF}$ and $F^+,HF^+$ for the rest of this section.

\begin{theorem}\cite{OS1}\label{composition} 
Let $W_1,W_2$ be cobordisms connecting the 3-manifolds $M_1$, $M_2$, and $M_3$ with $W=W_1\cup_{M_2}W_2$. Then:
\begin{enumerate}
\item[(a)] For $Spin^\C$-structures $\s_i\in Spin^\C(W_i), i=1,2$, $$F^\circ_{W_2,\s_2}\circ F^\circ_{W_1,\s_1}=\sum_{\{\s\in Spin^\C(W)\ \rvert\ \s\rvert_{W_i}=\s_i\}}\pm F^\circ_{W,\s}.$$
\item[(b)] For $\s\in Spin^\C(W)$, $\s_i=\s\rvert_{W_i}$, and $\Z[H^1(M_1)]$-module $\M_i$, there exists $\underline{F}^\circ_{W_i,\s_i}\in [\underline{F}^\circ_{W_i,\s_i}]$, $i=1,2$, such that $[\underline{F}^\circ_{W,\s}]=[\Pi\circ\underline{F}^\circ_{W_1,\s_1}\circ\underline{F}^\circ_{W_2,\s_2}]$, where $\Pi:\M(W_1)(W_2)\rightarrow\M(W)$ is the natural homomorphism.
\end{enumerate}
\end{theorem}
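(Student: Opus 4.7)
The plan is to reduce to elementary cobordisms by handle decomposition and then establish the composition law in the main $2$-handle case via a pseudo-holomorphic quadrilateral count, following the approach originally taken by Ozsv\'ath and Szab\'o when they defined these cobordism maps. Each cobordism $W_i$ admits a Morse-theoretic decomposition $W_i = W_i^{(1)} \cup W_i^{(2)} \cup W_i^{(3)}$, where $W_i^{(j)}$ is the piece built from $j$-handles. Since the cobordism maps $F^\circ_{W_i,\s_i}$ are defined by composing the maps associated to these elementary pieces, it suffices to verify the composition formula on each consecutive pair of such elementary attachments.

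For $1$-handle and $3$-handle cobordisms the maps are formally defined by tensoring with (respectively, contracting against) a canonical generator of $\widehat{HF}(S^1\times S^2)$, and each such elementary cobordism admits a unique $Spin^\C$-structure restricting to any prescribed boundary structures; hence the composition formula reduces to a single-term identity that holds tautologically. The main content therefore lies in two consecutive $2$-handle attachments, where $F^\circ$ is defined by a pseudo-holomorphic triangle count in a Heegaard triple $(\Sigma,\alpha,\beta,\gamma)$, and the composite cobordism $W=W_1\cup_{M_2}W_2$ is encoded by a Heegaard quadruple $(\Sigma,\alpha,\beta,\gamma,\delta)$. I would analyze the compactified moduli space of pseudo-holomorphic rectangles with boundary on the cyclically-ordered Lagrangians $T_\alpha,T_\beta,T_\gamma,T_\delta$ in $\mathrm{Sym}^g(\Sigma)$. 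Its codimension-one boundary components correspond to two types of broken configurations: splittings along $T_\alpha\cap T_\gamma$, whose count yields the iterated composition $F^\circ_{W_2,\s_2}\circ F^\circ_{W_1,\s_1}$ after summing over intermediate generators in $\widehat{HF}(M_2)$, and splittings along $T_\beta\cap T_\delta$, whose count reproduces the triangle map for the composite cobordism $W$ summed over all $\s\in Spin^\C(W)$ restricting to the prescribed $\s_i$ on $W_i$. Setting the total boundary contribution to zero gives part (a).

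For part (b) I would run the same quadrilateral degeneration in the twisted setting, keeping careful track of periods. The set of $Spin^\C$-structures on the composite $W$ restricting to a fixed pair $(\s_1,\s_2)$ is governed, via Mayer--Vietoris, by an $H^1(M_2)$-coboundary ambiguity, and this is precisely what the two-step coefficient extension $\M \rightsquigarrow \M(W_1)\rightsquigarrow \M(W_1)(W_2)$ tracks. The natural map $\Pi:\M(W_1)(W_2)\to\M(W)$ collapses this two-step extension to the single-step extension for the total cobordism, and tensoring the degeneration identity above with the appropriate twisting module produces the equivalence-class identity $[\underline{F}^\circ_{W,\s}]=[\Pi\circ\underline{F}^\circ_{W_1,\s_1}\circ\underline{F}^\circ_{W_2,\s_2}]$.

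The hardest part is the pseudo-holomorphic rectangle count underlying the $2$-handle case: verifying that the codimension-one ends of the moduli space split precisely into the two families of broken configurations described, and that the induced $Spin^\C$ refinement matches exactly the indexing on the right-hand side of (a), requires the full compactness and gluing package for holomorphic curves in $\mathrm{Sym}^g(\Sigma)$ together with a careful identification of which $Spin^\C$-structures on $W$ are represented by triangles in a given quadruple diagram.
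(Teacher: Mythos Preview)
The paper does not prove this theorem at all: it is quoted as a background result of Ozsv\'ath and Szab\'o (with citation \cite{OS1}), stated without proof in the section on Heegaard Floer homology and then applied in the proof of Theorem~\ref{rel-slice-genus}. So there is no ``paper's own proof'' to compare against.

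That said, your sketch is a reasonable outline of how the composition law is actually established in the original Ozsv\'ath--Szab\'o work (primarily in \cite{OS4}, with the twisted refinements in \cite{OS5,OS1}). The reduction to elementary handle attachments, the triviality for $1$- and $3$-handles, and the associativity argument for consecutive $2$-handles via degeneration of holomorphic quadrilaterals in a Heegaard quadruple are exactly the ingredients. One point to sharpen: the full composition law requires not only the case of two consecutive $2$-handles but also the compatibility of $2$-handle maps with adjacent $1$- and $3$-handle maps (commutation of the triangle maps with the connect-sum maps), and the verification that the resulting composite is independent of the chosen handle decomposition and ordering; these steps are not entirely formal and are part of the substantial invariance package in \cite{OS4}. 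Your description of part (b) is morally right but vague where it matters most---the identification of the $Spin^\C$ ambiguity with the $\delta H^1(M_2)$ torsor and its interaction with the twisted coefficients is the technical heart, and simply ``tensoring the degeneration identity with the twisting module'' hides the work of checking that representatives can be chosen so that the equivalence-class identity holds on the nose before passing to classes.
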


\begin{remark} Theorem \ref{composition} holds also in the case when $\partial W$ is disconnected for the maps on $\widehat{HF}$ induced by counting polygons.
\end{remark}

We also have the following composition property under blow-up.

\begin{theorem} \cite{OS1}\label{blow-up} 
Let $W$ be a cobordism from a 3-manifold $M_1$ to a 3-manifold $M_2$, $\s$ a $Spin^\C$-structure on $W$. Blow up an interior point of $W$ to obtain another cobordism $\widehat{W}$ from $M_1$ to $M_2$, let $\widehat{\s}$ be the lift of $\s$ to $\widehat{W}$ with $\langle c_1(\widehat{\s}),[E]\rangle=-1$, where $E$ is the exceptional sphere. Then $F^\circ_{W,\s}=F^\circ_{\widehat{W},\widehat{\s}}$.
\end{theorem}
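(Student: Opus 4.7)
The plan is to reduce the statement to a purely local computation in a neighborhood of the blown-up point, by factoring the cobordism map of $\widehat{W}$ through the cobordism map of a small piece containing only the exceptional sphere. To do this, I first decompose $\widehat{W}$ as follows: pick a tiny round $S^3$ just outside the blown-up point, so that $\widehat{W}$ splits as the union of $W^{\circ} = W \setminus B^4$, viewed as a cobordism from $M_1$ to $M_2 \sqcup S^3$, and $X = \overline{\mathbb{CP}^2} \setminus B^4$, viewed as a cap on that extra $S^3$. Equivalently, after inserting a cancelling $B^4$, I can view $X$ as a cobordism from $S^3$ to $S^3$ and apply Theorem~\ref{composition}(a) along the intermediate level $S^3$. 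Since the inclusion $W^{\circ} \hookrightarrow W$ (rescaling the removed ball) does not change the induced map on $HF^\circ$, and since the cobordism map of $B^4$ (from $\emptyset$ to $S^3$) sends $1$ to the distinguished generator of $HF^\circ(S^3)$, this decomposition yields
$$F^\circ_{\widehat{W},\widehat{\s}} \;=\; F^\circ_{X,\widehat{\s}|_X} \circ F^\circ_{W,\s},$$
where the sum over $Spin^\C$-extensions in Theorem~\ref{composition}(a) collapses to a single term because $H^2(X;\Z)\cong\Z$ is generated by the Poincar\'e dual of $E$ and the condition $\langle c_1(\widehat{\s}),[E]\rangle=-1$ uniquely determines $\widehat{\s}|_X$.

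The task is therefore reduced to showing that $F^\circ_{X,\s_X}$ equals the identity on $HF^\circ(S^3)$, where $\s_X$ is the $Spin^\C$-structure on $X$ with $\langle c_1(\s_X),[E]\rangle=-1$. I would carry this out by noting that $X$ has a handle decomposition consisting of a single $2$-handle attached along the unknot in $S^3$ with framing $-1$. Fix a genus-one Heegaard triple diagram $(\S,\alpha,\beta,\gamma)$ subordinate to this $2$-handle, where $\beta$ and $\gamma$ differ by the surgery and $(\S,\alpha,\beta)$ and $(\S,\alpha,\gamma)$ are both standard genus-one diagrams for $S^3$. The cobordism map is then computed by the triangle-counting map $\widehat{f}_{\alpha\beta\gamma}$ (respectively $f^+_{\alpha\beta\gamma}$), with the contributions indexed by $Spin^\C$-structures on $X$ lifting the chosen generators.

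The main obstacle, and the technical heart of the proof, is the explicit holomorphic triangle count identifying the unique homotopy class contributing to $\s_X$ and verifying that its moduli space consists of a single point counted with sign $+1$. Once this is done, the map on $HF^\circ(S^3)\cong\Z$ (respectively on $HF^+(S^3)\cong\Z[U,U^{-1}]/U\Z[U]$) is a degree-preserving isomorphism sending the top generator to itself, hence the identity. This calculation is precisely the blow-up formula established by Ozsv\'ath and Szab\'o in \cite{OS1}, parallel to the Fintushel--Stern blow-up formula in Seiberg--Witten theory, and the remainder of the argument is formal.
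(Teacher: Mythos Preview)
The paper does not supply its own proof of this statement: Theorem~\ref{blow-up} is quoted from Ozsv\'ath--Szab\'o \cite{OS1} as a background result and is used without argument. There is therefore no proof in the paper to compare your proposal against.

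For what it is worth, your outline is essentially the argument in the original source: reduce to a local model via the composition law and then compute the triangle map for the single $(-1)$-framed $2$-handle realizing $\overline{\mathbb{CP}^2}\setminus B^4$. One point of imprecision is the decomposition itself: for a general cobordism $W$ from $M_1$ to $M_2$ there is no intermediate level $S^3$ through which to factor, so the composite $F^\circ_{X}\circ F^\circ_{W}$ as written does not literally parse via Theorem~\ref{composition}(a). The standard fix is to factor off a collar and write $\widehat{W}=\big((M_1\times I)\#\overline{\mathbb{CP}^2}\big)\cup_{M_1}W$; the problem then becomes showing that the map induced by $(M_1\times I)\#\overline{\mathbb{CP}^2}$, in the $Spin^\C$-structure with $\langle c_1,[E]\rangle=-1$, is the identity on $HF^\circ(M_1)$, and this reduces to exactly the local $2$-handle triangle count you describe. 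With that adjustment your sketch is correct.
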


\begin{remark} Theorem \ref{blow-up} holds also in the case when $\partial W$ is disconnected for the maps on $\widehat{HF}$ induced by counting polygons.
\end{remark}

The Ozsv\'ath-Szab\'o invariant $c(\xi)$ of a contact 3-manifold $(M,\xi)$ is an element of $\widehat{HF}(-M,\t_\xi)/\{\pm1\}$, where $\t_\xi$ is the $Spin^\mathbb{C}$-structure associated  with $\xi$. Then $c^+(\xi)\in HF^+(-M,\t_\xi)/\{\pm 1\}$ is the image of $c(\xi)$ under the map $\widehat{HF}(-M)\rightarrow HF^+(-M)$ (\cite{OS}). For a $\Z(H^1(M)]$-module $\M$, we have  the following twisted versions of the Ozsv\'ath-Szab\'o contact invariants: $c(\xi;\M)\in \widehat{HF}(-M,\t_\xi;\M)/\Z[H^1(M)]^\times$ and $c^+(\xi;\M)\in HF^+(-M,\t_\xi;\M)/\Z[H^1(M)]^\times$, where $\Z[H^1(M)]^\times$ denotes the set of units in $\Z[H^1(M)]$. Then for $[\omega]\in H^2(M;\R)$, we have the $[\omega]$-twisted invariants $c(\xi;[\omega])\in \widehat{HF}(-M,\t_\xi;[\omega])/\{\pm T^s\ \rvert\ s\in\R\}$ and $c^+(\xi;[\omega])\in HF^+(-M,\t_\xi;[\omega])/\{\pm T^s\ \rvert\ s\in\R\}$ (see \cite{OS1}). Note that these contact invariants are nonzero for tight $\xi$.

We have the following theorems describing the behavior of the contact invariants under cobordisms induced by Legendrian surgeries.

\begin{proposition}\cite{OS}\label{legsurgery}
If $(M',\xi')$ is the result of Legendrian surgery on a Legendrian link in $(M,\xi)$, then $\widehat{F}_{W,\s_\omega}(c(\xi'))=c(\xi)$, where $W$ is the cobordism induced by the surgery and $\s_\omega$ is the canonical $Spin^\C$-structure on $W$ associated to $\omega$. Moreover, P. Ghiggini (\cite{Gh1}) observes that the argument extends to show that $\widehat{F}_{W,s}(c(\xi'))=0$ for any $Spin^\C$-structure $\s\ncong\s_\omega$ on $W$.
\end{proposition}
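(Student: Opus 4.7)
The plan is to interpret the Legendrian surgery cobordism as a Weinstein $2$-handle attachment and then use open book decompositions together with holomorphic triangle counts to track the contact element through the induced map on $\widehat{HF}$. Everything needed has already been set up in the paper: Section $6$ reviews the symplectic $2$-handle model, and the definition of the contact invariant as an element of $\widehat{HF}(-M,\t_\xi)$ has been recalled in the Heegaard Floer section.

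First, I would observe that by the Weinstein construction reviewed in Section $6$, attaching a symplectic $2$-handle along each component of the Legendrian link $L\subset(M,\xi)$ with framing $tb-1$ produces a symplectic cobordism $(W,\omega)$ whose concave end is $(M,\xi)$ and convex end is $(M',\xi')$. This $W$ carries the canonical $Spin^\C$-structure $\s_\omega$ determined by $\omega$. Reversing orientations we view $W$ as a cobordism from $-M'$ to $-M$, which induces the relevant map $\widehat{F}_{W,\s_\omega}:\widehat{HF}(-M',\t_{\xi'})\to\widehat{HF}(-M,\t_\xi)$ in which the contact invariants live.

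Next I would choose an open book decomposition $(B,\pi)$ of $M$ compatible with $\xi$ in the sense of Giroux, and arrange the Legendrian link $L$ to lie on a page $\S$. By a lemma of Plamenevskaya, Legendrian surgery on $L$ corresponds to modifying the monodromy of $(B,\pi)$ by a positive Dehn twist along each component of $L$, producing an open book compatible with $(M',\xi')$. Following the Ozsv\'ath--Szab\'o construction of the contact invariant, each of these open books produces a pointed Heegaard diagram with a distinguished intersection point $x_\xi$ (respectively $x_{\xi'}$) whose homology class represents $c(\xi)$ (respectively $c(\xi')$). The effect of the monodromy change can be realized on the Heegaard data by a triple diagram in which the third set of attaching curves is obtained from the second by the local model of a positive Dehn twist along $L$; this triple diagram represents precisely the Weinstein cobordism $W$. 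I would then count holomorphic triangles in this diagram and expect a unique small triangle joining $x_{\xi'}$, $x_\xi$, and the top-dimensional generator of $\widehat{HF}$ of the connect sum of $S^1\times S^2$'s that arises from the handleslide, contributing with sign $+1$, proving $\widehat{F}_{W,\s_\omega}(c(\xi'))=c(\xi)$. For the $Spin^\C$-structure carried by this triangle, I would evaluate the first Chern class on the cores of the $2$-handles and compare with the formula for $c_1(\s_\omega)$ in terms of the rotation numbers of the attaching Legendrian components; this forces the supporting $Spin^\C$-structure to be $\s_\omega$. For Ghiggini's extension, any other $\s\ncong\s_\omega$ differs from $\s_\omega$ by a nontrivial multiple of the Poincar\'e dual of a $2$-handle cocore, and a local count shows no holomorphic triangle representative survives in that $Spin^\C$-class, so the induced map vanishes on $c(\xi')$.

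The main obstacle will be verifying that the local Heegaard triple diagram encoding the positive Dehn twist really does represent the Weinstein $2$-handle together with its canonical $Spin^\C$-structure, and pinning down existence, uniqueness and sign of the small triangle. This is a delicate local computation of the same flavor as the one underlying the Ozsv\'ath--Szab\'o surgery exact triangle in the contact category, and requires careful bookkeeping of open book stabilizations and of the almost complex structure used to count triangles; the remaining global steps, by contrast, are functorial consequences of Theorem~\ref{composition}.
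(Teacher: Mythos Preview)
The paper does not give a proof of this proposition at all: it is quoted as a background result, attributed to Ozsv\'ath--Szab\'o \cite{OS} for the first assertion and to Ghiggini \cite{Gh1} for the vanishing in the non-canonical $Spin^\C$-structures, and is used as a black box in the proofs of Theorem~\ref{rel-slice-genus}. So there is no ``paper's own proof'' to compare your attempt against.

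That said, your sketch is the standard route taken in the original references: realize $(M,\xi)$ via a Giroux-compatible open book with the Legendrian link on a page, identify Legendrian surgery with composition of the monodromy by positive Dehn twists, build the associated Heegaard triple, and track the distinguished generator for $c(\xi')$ to the one for $c(\xi)$ by a small-triangle count; the $Spin^\C$-identification and Ghiggini's vanishing then follow from the Chern class evaluation on the $2$-handle cores. Your honest identification of the delicate step (that the triple diagram really models the Weinstein cobordism with its canonical $Spin^\C$-structure, and the uniqueness/sign of the small triangle) is accurate; this is exactly where the work lies in \cite{OS} and \cite{Gh1}. One small point: the result that Legendrian surgery corresponds to adding a positive Dehn twist to the monodromy is usually attributed to Akbulut--Ozbagci (or to Giroux's correspondence combined with Weinstein's model) rather than to Plamenevskaya, and you should arrange $L$ on a page only after possibly stabilizing the open book---but you already flag stabilizations as part of the bookkeeping.
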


\begin{remark} Proposition \ref{legsurgery} holds also in the case when $\partial W$ is disconnected for the maps on $\widehat{HF}$ induced by counting polygons.
\end{remark}

\begin{proposition}\cite{Gh1}\label{invariant}
If $(M',\xi')$ is the result of Legendrian surgery on a Legendrian link in $(M,\xi)$, then $F^+_{W,\s_\omega}(c^+(\xi'))=c^+(\xi)$, where $W$ is the cobordism induced by the surgery and $\s_\omega$ is the canonical $Spin^\C$-structure on $W$ associated to $\omega$. Moreover, $F^+_{W,s}(c^+(\xi'))=0$ for any $Spin^\C$-structure $\s\ncong\s_\omega$ on $W$.
\end{proposition}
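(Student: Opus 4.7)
The plan is to deduce Proposition \ref{invariant} from its hat-version analogue, Proposition \ref{legsurgery}, by pushing everything through the natural map $\pi:\widehat{HF}(-M,\t_\xi)\to HF^+(-M,\t_\xi)$, under which $c(\xi)\mapsto c^+(\xi)$ by definition (and similarly for an analogous map $\pi'$ on $-M'$). Since $c^+$ is defined exactly this way, the statement reduces to checking naturality of $\pi$ with respect to the cobordism-induced homomorphisms.

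Concretely, I would first record the commutative square
$$
\begin{CD}
\widehat{HF}(-M',\t_{\xi'}) @>{\widehat{F}_{W,\s}}>> \widehat{HF}(-M,\t_\xi) \\
@V{\pi'}VV @VV{\pi}V \\
HF^+(-M',\t_{\xi'}) @>{F^+_{W,\s}}>> HF^+(-M,\t_\xi)
\end{CD}
$$
valid for every $\s\in Spin^\C(W)$. Both cobordism maps are defined (apart from the 1- and 3-handle contributions, which are handled identically in the two theories) by counting the same family of holomorphic triangles on a triple Heegaard diagram, and $\pi$ is induced by a chain-level inclusion; so the square commutes at the chain level and therefore on homology. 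This is part of the standard Ozsv\'ath--Szab\'o functoriality package.

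With the square in hand the proof is a two-line diagram chase. For $\s=\s_\omega$, Proposition \ref{legsurgery} gives $\widehat{F}_{W,\s_\omega}(c(\xi'))=c(\xi)$, and applying $\pi$ yields
$$F^+_{W,\s_\omega}(c^+(\xi'))=F^+_{W,\s_\omega}(\pi'(c(\xi')))=\pi\bigl(\widehat{F}_{W,\s_\omega}(c(\xi'))\bigr)=\pi(c(\xi))=c^+(\xi).$$
For $\s\ncong\s_\omega$ the identical chase, starting from the vanishing $\widehat{F}_{W,\s}(c(\xi'))=0$ asserted by Proposition \ref{legsurgery}, produces $F^+_{W,\s}(c^+(\xi'))=\pi(0)=0$. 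Sign ambiguities introduced along the way are absorbed into the $\{\pm 1\}$ indeterminacy of the contact invariants.

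The step I expect to require the most care is verifying the naturality square in the presence of a multi-component Legendrian link surgery. For a single 2-handle the compatibility is essentially automatic from the chain-level descriptions, but in general one decomposes $W$ into elementary pieces and invokes the composition law (Theorem \ref{composition}) for both $\widehat{F}$ and $F^+$ to transport commutativity stage by stage. All the ingredients already appear in the Ozsv\'ath--Szab\'o literature, so this step is technical rather than deep.
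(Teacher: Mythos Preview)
Your argument is correct: the naturality square between $\widehat{HF}$ and $HF^+$ with respect to cobordism-induced maps is part of the standard Ozsv\'ath--Szab\'o package, and chasing $c(\xi')$ through it immediately transfers both assertions of Proposition~\ref{legsurgery} to the $+$ flavor. The decomposition into elementary cobordisms and the use of Theorem~\ref{composition} to handle a multi-component link are also handled correctly.

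As to comparison with the paper: the paper does \emph{not} supply its own proof of Proposition~\ref{invariant}. The result is quoted from Ghiggini~\cite{Gh1} and used as a black box in the proof of Theorem~\ref{rel-slice-genus}. So there is nothing in the present paper to compare your argument against. For what it is worth, Ghiggini's original argument also proceeds by relating the two flavors via the natural map $\widehat{HF}\to HF^+$ (together with the observation that $c^+(\xi)$ lies in the kernel of the $U$-action, which guarantees it is in the image of $\pi$), so your approach is essentially the intended one rather than a genuinely new route.
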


We also have the following non-vanishing result in the case of a symplectic cobordism from a contact 3-manifold to the standard tight contact 3-sphere as the boundary of a 4-ball.

\begin{theorem} \cite{OS1}\label{non-zero}
Let $(M,\xi)$ be a contact 3-manifold with a weak symplectic filling $(W,\omega)$. Let $B$ be an embedded 4-ball in the interior of $W$, so $W\setminus B$ is considered as a cobordism from $-M$ to $-\partial B$. Then $\underline{\widehat{F}}_{W\setminus B,\s_\omega\rvert_{W\setminus B};[\omega\rvert_{W\setminus B}]}(c(\xi;[\omega\rvert_M]))\neq 0$ and $\underline{F}^+_{W\setminus B,\s_\omega\rvert_{W\setminus B};[\omega\rvert_{W\setminus B}]}(c^+(\xi;[\omega\rvert_M]))\neq 0$, where $\s_\omega$ is the $Spin^\C$-structure on $W$ associated to $\omega$.
\end{theorem}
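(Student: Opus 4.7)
The plan is to reduce the non-vanishing assertion to the non-vanishing of the Ozsv\'ath--Szab\'o closed 4-manifold invariant of a symplectic cap of $(W,\omega)$, via Eliashberg's capping theorem, the twisted composition law for cobordism maps, and an identification of the image of the unit under the cap cobordism as the twisted contact invariant.

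First, apply Theorem \ref{thm:Eliash} to embed $(W,\omega)$ in a closed symplectic 4-manifold $(X,\Omega)$ with $\Omega\rvert_W=\omega$, and set $V=X\setminus\mathrm{int}(W)$, so that $(V,\Omega\rvert_V)$ is a symplectic cap with $\partial V=-M$, viewed as a cobordism $\emptyset\to -M$. If necessary, stabilize $(X,\Omega)$ by blowing up interior points of $V$ to arrange $b_2^+(X)\geq 2$; by Theorem \ref{blow-up}, the maps on $HF^+$ and $\widehat{HF}$ induced by the canonical $Spin^\C$-structure $\s_\Omega$ on the resulting closed symplectic 4-manifold are unchanged under such stabilization. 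Now remove a small 4-ball $B$ from the interior of $W\subset X$ and decompose $X\setminus B$ along $M$ as the composition of cobordisms $V\cup_{-M}(W\setminus B)$, where $W\setminus B$ is the cobordism from $-M$ to $-\partial B$ of the statement.

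Applying the twisted composition law (Theorem \ref{composition}(b)) with $[\Omega]$-twisted coefficients gives, at the level of equivalence classes,
$$\underline{F}^+_{X\setminus B,\s_\Omega\rvert_{X\setminus B};[\Omega\rvert_{X\setminus B}]}(1)=\underline{F}^+_{W\setminus B,\s_\Omega\rvert_{W\setminus B};[\omega]}\bigl(\underline{F}^+_{V,\s_\Omega\rvert_V;[\Omega\rvert_V]}(1)\bigr),$$
in the appropriately twisted $HF^+(-\partial B)$, up to the action of $\Z[\R]^\times$. A standard identification, combining the open-book definition of the twisted contact invariant with an explicit handle decomposition of the Eliashberg cap $V$ as a sequence of Weinstein 2-handle attachments followed by a symplectic Lefschetz fibration piece, together with repeated application of Proposition \ref{invariant}, yields $\underline{F}^+_{V,\s_\Omega\rvert_V;[\Omega\rvert_V]}(1)=c^+(\xi;[\omega\rvert_M])$ up to a unit. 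Thus the right-hand side equals, up to a unit, the quantity $\underline{F}^+_{W\setminus B,\s_\Omega\rvert_{W\setminus B};[\omega]}(c^+(\xi;[\omega\rvert_M]))$ we wish to show is non-zero. The left-hand side, in turn, is the Ozsv\'ath--Szab\'o invariant of the closed symplectic 4-manifold $(X,\s_\Omega)$ read off from the removed 4-ball, and this invariant is non-vanishing for $b_2^+(X)\geq 2$ by the Ozsv\'ath--Szab\'o analog of Taubes' non-vanishing theorem for Seiberg--Witten invariants of symplectic 4-manifolds. The assertion for $\underline{\widehat{F}}$ follows by the identical argument at the hat level, replacing the composition and non-vanishing statements by their $\widehat{HF}$ counterparts.

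The main obstacle is the identification $\underline{F}^+_{V,\s_\Omega\rvert_V;[\Omega\rvert_V]}(1)=c^+(\xi;[\omega\rvert_M])$; proving it requires carefully tracking the twisted contact invariant through Eliashberg's open-book-based construction of the cap, in which one alternately attaches Weinstein 2-handles along Legendrian links and glues in a symplectic Lefschetz fibration over a disc. The Weinstein handle steps are controlled directly by Proposition \ref{invariant}, while the Lefschetz fibration piece requires an extension of that argument to vanishing cycles. The $[\omega]$-twisting is essential for isolating the canonical $Spin^\C$-structure in the composition law and preventing cancellation from other $Spin^\C$-restrictions on $M$, but the bulk of the work lies in this handle-by-handle computation.
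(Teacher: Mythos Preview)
The paper does not give its own proof of this statement; it is quoted as a background result from Ozsv\'ath--Szab\'o \cite{OS1}. Your outline is essentially the strategy used there: embed $(W,\omega)$ in a closed symplectic $(X,\Omega)$ via the Eliashberg cap, factor $X\setminus B$ through $-M$, identify the image of the generator under the cap piece with the twisted contact invariant, and invoke non-vanishing of the closed four-manifold invariant of $(X,\s_\Omega)$.

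Two points need correction. First, blowing up does \emph{not} increase $b_2^+$: each blow-up contributes an exceptional sphere of square $-1$ and hence raises $b_2^-$ only. The condition $b_2^+(X)\geq 2$ must instead be arranged using the freedom in the cap construction itself (stabilizing the open book, or choosing the Lefschetz-fibration piece so that it carries surfaces of positive square). Second, the Eliashberg cap $V$ is not built from Weinstein 2-handles along Legendrians: the 2-handle is attached along the connected binding with the \emph{page} framing, and the remainder is a Lefschetz fibration over a disc. So Proposition~\ref{invariant} is not the relevant tool for the identification $\underline{F}^+_{V,\s_\Omega\rvert_V;[\Omega\rvert_V]}(1)=c^+(\xi;[\omega\rvert_M])$; that identity is rather the content of the open-book definition of the (twisted) contact invariant together with the surgery-triangle computation carried out in \cite{OS1}. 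You correctly flag this step as the crux, but the mechanism you name for it is the wrong one.
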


\section{Mrowka-Rollin Seiberg-Witten Invariants and Adjunction Inequality}

In \cite{KM}, Kronheimer and Mrowka defined the Sieberg-Witten invariant of a connected oriented smooth four-manifold $W$ whose boundary $M$ carries a contact structure $\xi$. The map is defined by $sw_{(W,\s)}:(Spin^\C,H)\rightarrow\Z$, where $H$ is the group of isomorphisms $h:\s\rvert_M\rightarrow\s_\xi$ of the restriction of an element $\s\in Spin^\C$ to $M$ and the canonical $Spin^\C$-structure $\s_\xi$ on $M$ induced by the contact structure.

The construction for the proof of Theorem \ref{rel-slice-genus} is really based on the original construction in \cite{KM} and the fact that the Seiberg-Witten invariant defined by Kronheimer-Mrowka behaves well under Weinstein 2-handle attachments. The key result that is applied here is the adjunction inequality.

\begin{proposition}[Adjunction inequality]\label{KMr} Let $W$ be a 4-manifold with contact boundary $(M,\xi)$ and an element $(\s,h)\in Spin^\C(W,\xi)$ such that $$sw_{(W,\xi)}(\s,h)\neq 0.$$
Then every closed surface $\Sigma\subset W$ with $[\Sigma]^2=0$ and genus at least 1 satisfies $$\rvert c_1(\s)\cdot\Sigma\rvert\leq-\chi(\Sigma).$$

\end{proposition}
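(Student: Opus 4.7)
The plan is to adapt the Weitzenb\"ock-curvature proof of the closed-manifold adjunction inequality to the contact-boundary framework of Kronheimer-Mrowka. Nonvanishing of $sw_{(W,\xi)}(\s,h)$ guarantees, after attaching the cylindrical end $M\times[0,\infty)$ with a contact-type metric, a nonempty, compact moduli space of irreducible solutions $(A,\Phi)$ to the Seiberg-Witten equations on the completed manifold with the asymptotic behaviour prescribed by $(\xi,h)$; fix one such solution. Because $[\Sigma]^2=0$, a tubular neighborhood $N$ of $\Sigma$ is diffeomorphic to $\Sigma\times D^2$, and since $\Sigma$ is disjoint from the cylindrical end, I am free to arrange the metric on $N$ as a product $g_\Sigma\oplus g_{D^2}$, where $g_\Sigma$ has constant curvature: hyperbolic with $s_\Sigma=-2$ if $g(\Sigma)\geq 2$, and flat with $s_\Sigma=0$ if $g(\Sigma)=1$.

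Next I would consider a family of metrics $g_t$ obtained by stretching the $D^2$ factor and extract, via the standard Kronheimer-Mrowka compactness, a limiting configuration on $\Sigma\times\R^2$ that is effectively translation-invariant in the $\R^2$ direction. Applying the Weitzenb\"ock formula on a $\Sigma$-slice to a solution of the SW equations yields the pointwise bound $|\Phi|^2\leq -s_\Sigma$, and the curvature equation $F_A^+=\sigma(\Phi)$ then gives
$$\int_\Sigma |F_A|\,\leq\, C\cdot\mathrm{area}_{g_\Sigma}(\Sigma)\,=\,C\cdot 2\pi\,|\chi(\Sigma)|.$$
A Chern-Weil computation translates this into the pairing estimate
$$\bigl|\langle c_1(\s),[\Sigma]\rangle\bigr|\,=\,\left|\frac{1}{2\pi i}\int_\Sigma F_A\right|\,\leq\, -\chi(\Sigma),$$
and the two-sided bound follows because the conjugate $Spin^\C$-structure $\bar{\s}$ has $c_1(\bar\s)=-c_1(\s)$ and also satisfies $sw_{(W,\xi)}(\bar\s,\bar h)\neq 0$.

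The main obstacle is the global-to-local reduction: verifying that the global SW solutions on $W$ with contact-type asymptotics along $M$ restrict to $N$ in a way compatible with the stretched-metric limit argument. The contact end introduces noncompactness that could in principle spoil the $L^2$ control needed to drive the Weitzenb\"ock estimate on a $\Sigma$-slice. This is precisely where the Kronheimer-Mrowka compactness theorems of \cite{KM} for contact-asymptotic moduli spaces are essential: they ensure that solutions remain $L^2$-controlled on any fixed compact region of $W$ containing $\Sigma$ and that energy does not escape into the end, which is all the localization argument requires. Once this is in hand, the remainder is the standard conformal/Weitzenb\"ock analysis familiar from the closed case, together with the trivial observation that $c_1(\s)$ is computed from the curvature of the determinant line of the chosen SW solution.
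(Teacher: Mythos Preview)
The paper does not give its own proof of this proposition. Proposition~\ref{KMr} is stated in Section~9 as a background result imported from Kronheimer--Mrowka \cite{KM} (with the Mrowka--Rollin extension \cite{MR} in mind), and is then used as a black box in the proofs of Theorem~\ref{rel-slice-genus}(a)(ii), (b)(ii), and in the second remark on part~(c). So there is no ``paper's proof'' to compare your proposal against.

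Your sketch is a reasonable outline of the standard Seiberg--Witten adjunction argument, and it is essentially the argument Kronheimer--Mrowka carry out in \cite{KM}; the stretching/neck argument together with the Weitzenb\"ock bound $|\Phi|^2\le -s$ and Chern--Weil is the right mechanism. One small point: your appeal to the conjugate $Spin^\C$-structure $\bar\s$ to obtain the two-sided bound is unnecessary and, in the contact-boundary setting, not obviously available, since the contact structure $\xi$ breaks the conjugation symmetry of the invariant. The cleaner way to get the absolute value is simply to apply the one-sided inequality to $-\Sigma$: since $c_1(\s)\cdot(-\Sigma)=-c_1(\s)\cdot\Sigma$ and $\chi(-\Sigma)=\chi(\Sigma)$, the bound for $-\Sigma$ gives the other inequality directly. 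Beyond that, your acknowledgement that the compactness theorems of \cite{KM} are doing the real work in localizing to the neighborhood of $\Sigma$ is accurate; the details of that compactness are nontrivial but are precisely what \cite{KM} supplies.
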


In particular, the Seiberg-Witten invariant defined by Kronheimer-Mrowka in \cite{KM} generalizes in the case when $W$ has disconnected boundary $(M_1,\xi_1)\cup\cdots(M_n,\xi_n)$, so we have a version of the adjunction inequality for this case.

\section{Relative Slice Thurston-Bennequin Inequalities}

The following lemma is a generalization from \cite{MR, Wu1}.

\begin{lemma} \cite{MR, Wu1}\label{adjust}
Let $W$ be an oriented 4-manifold with $\partial W=M_1\cup\cdots\cup M_n$, $\xi_i$ a contact structure on $M_i$, and $\s$ a $Spin^\C$-structure on $W$ with isomorphisms $h_i:\s\rvert_{M_i}\rightarrow\t_{\xi_i}$ for $i=1,\dots,n$. Let $K_i\subset (M_i,\xi_i)$ be a Legendrian knot, and $F\subset W$ an embedded surface bounded by $K_1\cup\cdots\cup K_n$. Then there exist Legendrian knots $K_i'\subset (M,_i,\xi_i)$ for $i=1,\dots,n$ and an embedded surface $F'\subset W$ with boundary $K_1'\cup\cdots\cup K_n'$ such that $$tb(K_1',\dots,K_n',F')+\rvert r(K_1',\dots,K_n',F',\s,h_1,\dots,h_n)\rvert+\chi(F')=\hspace{1in}$$
$$\hspace{1in}=tb(K_1,\dots,K_n,F )+\rvert r(K_1,\dots,K_n,F,\s,h_1,\dots,h_n)\rvert+\chi(F),$$ 

$$\chi(F')\leq-n,$$ and 
$$tb(K_1',\dots,K_n',F')\geq n.$$
\end{lemma}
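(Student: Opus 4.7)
The strategy is to modify the given configuration $(K_1,\dots,K_n,F)$ in small neighborhoods of each $K_i$ by a sequence of elementary moves, each of which preserves the quantity $\widetilde{tb}+|\widetilde{r}|+\chi$. The argument parallels the single-knot version in Mrowka--Rollin \cite{MR} and Wu \cite{Wu1}, with the observation that since the moves can be performed independently in disjoint Darboux neighborhoods of the $K_i$, the multi-component case reduces to $n$ copies of the single-knot case glued together.

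Two families of elementary moves are needed, both supported in a Darboux chart around a chosen point of some $K_i$. First, a \emph{Legendrian stabilization} of $K_i$, with the surface $F$ extended tautologically across the resulting zigzag: this changes the triple $(\widetilde{tb},\widetilde{r},\chi)$ by $(-1,\pm 1,0)$, and I would always select the sign of the stabilization so that $|\widetilde{r}|$ increases by one, leaving $\widetilde{tb}+|\widetilde{r}|+\chi$ unchanged. Second, a \emph{handle attachment} to $F$ along a short embedded arc joining two nearby stabilization cusps on the same $K_i$, framed against the transverse field $v$ so that the $v$-pushoff of the modified surface acquires exactly two new positive transverse intersections with $F$. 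This changes $(\widetilde{tb},\widetilde{r},\chi)$ by $(+2,0,-2)$ and again preserves the sum; its net effect is to raise $\widetilde{tb}$ by two, lower $\chi$ by two, and leave $\widetilde{r}$ fixed.

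Starting from $(K_1,\dots,K_n,F)$, I would iterate the handle-attachment move on one or more of the $K_i$ until both $\widetilde{tb}\geq n$ and $\chi(F')\leq -n$ hold. After $N$ applications the Thurston--Bennequin number has grown by $2N$ and the Euler characteristic has dropped by $2N$, so any $N>\max\{(n-\widetilde{tb})/2,(n+\chi(F))/2\}$ achieves both targets simultaneously, and a single closing stabilization (move type one) handles any remaining parity mismatch without disturbing the sum.

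The main technical obstacle is verifying that the handle-attachment move admits an honest embedded realization producing precisely the triple shift $(+2,0,-2)$. This reduces to a local contact-geometric computation in a Darboux--Weinstein neighborhood of a Legendrian arc containing two cusp pairs: one writes down an explicit model of the $1$-handle, checks that its framing relative to $v$ contributes exactly two to the self-intersection $F'\cdot F$, and verifies that the trivialization of $\xi$ along $K_i$ induced by $u_{K_i}$ extends across the new handle with vanishing relative Chern-class contribution, so that $\widetilde{r}$ is unaffected. Once this local model is in hand, the multi-component bookkeeping is straightforward, since moves at distinct $K_i$ occur in disjoint regions and hence commute.
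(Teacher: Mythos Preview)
Your overall strategy and the reduction to local moves near each $K_i$ are fine, and the observation that the $n$-component case factors into independent single-knot modifications is exactly the point the paper makes. But there is a genuine gap in your second move.

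You claim the ``handle attachment''---a single handle attached to $F$ along an arc joining two cusps on $K_i$---changes $(\widetilde{tb},\widetilde r,\chi)$ by $(+2,0,-2)$. A single $2$-dimensional $1$-handle (band) lowers $\chi$ by exactly $1$, not $2$. If instead you intend a tube attached to the interior of $F$ (which does lower $\chi$ by $2$), then $\partial F$ is unchanged and, since the construction is supported in a contractible Darboux chart, the relative class $[F]\in H_2(W,\partial F;\Z)$ is unchanged as well; hence $\widetilde{tb}=F'\cdot F$ is unchanged---you cannot manufacture net homological self-intersection by a local framing choice in a ball. Under either reading the triple $(+2,0,-2)$ does not arise from the move you describe. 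There is also the unaddressed point that after a band move the new boundary curve must again be Legendrian, which constrains the band and its framing and is precisely where the contact geometry enters.

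The paper's route is different and sidesteps this. Following \cite{Wu1}, one connect-sums the $K_i$ with copies of the $tb=1$ Legendrian right trefoil $T\subset(S^3,\xi_{\mathrm{std}})$, taking the boundary connect sum of $F$ with the trefoil Seifert surfaces; this automatically keeps the boundary Legendrian and gives the required increase in $\widetilde{tb}$ together with a decrease in $\chi$. The only new feature in the $n$-knot case, as the paper notes, is the freedom of how to distribute the trefoils among the $K_i$. If you want to keep a purely local description, the cleanest fix is to recognize your ``handle'' move as a Darboux-ball realization of the trefoil connect sum and redo the arithmetic from that explicit model, rather than positing an ad hoc framing that forces the numbers to come out.
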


\begin{proof} If we have $\chi(F)\leq-n$, and $tb(K_1,K_2,F)\geq n$, we are done. If not, the proof is identical to the proof in  \cite{Wu1}, with the only point here that we have freedom which of the $K_i\subset (M_1,\xi_1)$ to connect sum with $n$ trefoils $T\subset (S^3,\xi_{std})$ with $tb(T)=1$ with respect to a Seifert surface $\Sigma_T\subset (S^3,\xi_{std})$. We could connect each one with one trefoil or only one with all $n$, etc., and we will still have the above result in each case. We then have a knot $K_i'\subset(M_1',\xi_1')$, where $M_i'$ is diffeomorphic to $M_i$ and $\xi_i'$ is isotopic to $\xi_i$ after we identify $M_i'$ and $M_i$ for $i=1,\dots,n$. Thus we obtain an embedded surface $F'\subset W$ with the desired properties.
\end{proof}

\begin{proof}[{\bf{\em Proof of Theorem \ref{rel-slice-genus}(a)(i)}}] This is a direct application of Theorem \ref{slice-genus}. We briefly recall the construction used in both theorems in order to motivate a generalization.

By Lemma \ref{adjust}, we only need to prove Theorem \ref{rel-slice-genus}(a)(i) for $K_1,\dots,K_n$ and $F$ with $\widetilde{tb}(K_1,\dots,K_n,F)\geq n\ \text{and}\ \chi(F)\leq-n$.
We assume these are true throughout the proof.

We first attach 1-handles between the boundary components $M_i$ of $W$ by identifying 3-balls $B_i^3\subset M_i$ and $B^3_j\subset M_j$ disjoined from the boundary components of $F$ with the 3-balls $B^3\times\{0,1\}$, which are the ``ends" of the 1-handle $B^3\times B^1\cong B^3\times[0,1]$. The gluing is done via an orientation-reversing diffeomorphism so that we obtain an oriented 4-manifold with connected contact oriented boundary $(M,\xi)$ (the contact structures $\xi_i$ are extended via the contact structure on the boundary $\partial B^3\times[0,1]$). In particular, for a $Spin^{\C}$-structure $\s$ with $\s\rvert_{M_i}=\t_{\xi_i}$, we have a unique extension across the (symplectic) 1-handles. We let the new 4-manifold with boundary $(M,\xi)$ be called $W'$.

Then for each $i=1,\dots,n$, we perform Legendrian surgery along $K_i$ which gives us a symplectic cobordism $(V_i,\omega_i')$ from $(M_i,\xi_i)$ to a contact $3$-manifold $(M_i',\xi_i')$ (c.f. \cite{We,Wu1}). In order to apply Theorem \ref{invariant}, we will consider these as a surgery on the link $K_1\cup\cdots\cup K_n$ in the connected contact 3-manifold $(M,\xi)$.  Let $(V,\omega)$ denote the union of these cobordisms from $(M,\xi)$ to $(M'\xi')$. By Proposition \ref{non-zero}, $F^+_{V,\s_\omega}(c^+(\xi'))=c^+(\xi)$. Let $\widetilde{W}=W'\cup_MV$. Then by Theorem \ref{composition},
$$ \sum_{\begin{matrix}\{\widetilde{\s}\in Spin^{\C}(\widetilde{W})~|~\\ \widetilde{\s}|_{W'}\cong\s, \widetilde{\s}|_V\cong\s_\omega\}\end{matrix}}\pm F^+_{\widetilde{W},\widetilde{\s}}(c^+(\xi'))=F^+_{W',\s}\circ F^+_{V,\s_\omega}(c^+(\xi'))=F^+_{W',\s}(c^+(\xi))\neq0.$$
 
So there is a $\widetilde{\s}\in Spin^\C(\widetilde{W})$ with $\widetilde{\s}\rvert_{W'}\cong\s$, $\widetilde{\s}\rvert_V\cong\s_\omega$, and $F^+_{\widetilde{W},\widetilde{\s}\rvert_{\widetilde{W}}}(c^+(\xi'))\neq 0$.

Denote the above isomorphisms $g:\widetilde{\s}\rvert_{W}\cong\s$, $k:\widetilde{s}\rvert_V\cong\s_\omega$, and the natural isomorphism $f:\s_\omega\rvert_{M'}\rightarrow\t_{\xi'}$. Define $h:\s\rvert_{M'}\rightarrow\t_{\xi'}$ by $h=f\circ k\circ g^{-1}$.

Let $\widetilde{F}\subset \widetilde{W}$ be $F$ capped off by the cores of the 2-handles form the Legendrian surgery, then $\chi(\widetilde{F})=\chi(F)+n\leq0$, $[\widetilde{F}]\cdot[\widetilde{F}]=tb(K_1,\dots,K_n, F)-n\geq0$, and $c_1(\widetilde{\s})([\widetilde{F}])=r(K_1,\dots,K_n, F, \s\rvert_{W'}, h)$.

Blow-up $tb(K_1,\dots,K_n,F)-n$ points on the core of any of the 2-handles to obtain a new 4-manifold $\widehat{W}$ with a natural projection $\pi:\widehat{W}\rightarrow\widetilde{W}$. Lift $\widetilde{\s}$ to $\widehat{W}$ and call the lift $\widehat{\s}$, choose the particular lift that evaluates -1 on each exceptional sphere coming from $\pi^{-1}(\widetilde{F})$ and let $\widehat{F}$ be the lift of $\widetilde{F}$ to $\widehat{W}$ obtained by removing the exceptional spheres from $\pi^{-1}(\widetilde{F})$. Then $\chi(\widehat{F})=\chi(\widetilde{F})=\chi(F)+n$, $[\widehat{F}]\cdot[\widehat{F}]=0$, and $c_1(\widehat{\s},[\widehat{F}])=r(K_1,\dots,K_n, F, \s\rvert_{W'}, h)+tb(K_1,\dots,K_n,F)-n$.

Since $[\widetilde{F}]\cdot[\widetilde{F}]=0$, $\widehat{F}$ has a neighborhood $U\subset\widehat{W}$ diffeomorphic to $\widehat{F}\times D^2$. Consider an embedded 4-ball $B\subset\widetilde{W}$ and let $\widehat{B}\subset U\subset \widehat{W}$ be the pre-image of $B\subset W\subset\widetilde{W}$ under $\pi$. Then, by Theorem \ref{blow-up}, $F^+_{\widehat{W}\setminus\widehat{B},\widehat{\s}\rvert_{\widehat{W}\setminus\widehat{B}}}\big{(}c^+(\widehat{\xi'})\big{)}=F^+_{\widetilde{W}\setminus B,\widetilde{\s}\rvert_{\widetilde{W}\setminus B}}\big{(}c^+(\xi')\big{)}\neq 0$. Since $F^+_{\widehat{W}\setminus\widehat{B},\widehat{\s}\rvert_{\widehat{W}\setminus\widehat{B}}}$ does not depend on the location of $\widehat{B}$, assume $\widehat{B}\subset (U\setminus\partial U)$. Let $\widehat{W}_1=\widehat{W}\setminus U$ and $\widehat{W}_2=U\setminus \widehat{B}$. We get a composition of the cobordisms $\widehat{W}_1$ with $\partial \widehat{W}_1=-\widehat{M'}\cup-\partial U$ and $\widehat{W}_2$ with $\partial \widehat{W}_2=-\partial U\cup-\partial \widehat{B}$ along $\partial U=\widehat{F}\times S^1$. By Theorem \ref{composition} 
(with $\widehat{\t}_{\xi'}$ the lift of $\t_{\xi'}$ from $\widehat{\s}\rvert_{\widehat{M'}}\rightarrow\widehat{\t}_{\xi'}$), we have the following maps.
$$\underline{F}^+_{\widehat{W}_1,\widehat{\s}\rvert_{\widehat{W}_1}}:HF^+(-\widehat{M'},\widehat{\t}_{\xi'};\Z)\rightarrow\underline{HF}^+(\partial U,\widehat{\s}\rvert_{\partial U};\Z(\widehat{W}_1)),$$
$$\underline{F}^+_{\widehat{W}_2,\widehat{\s}\rvert_{\widehat{W}_2}}:\underline{HF}^+\big{(}-\partial U,\widehat{\s}\rvert_{\partial U};\Z(\widehat{W}_1)\big{)}\
\rightarrow\underline{HF}^+\big{(}-\partial\widehat{B},\widehat{s}\rvert_{\partial\widehat{B}};\Z(\widehat{W}_1)(\widehat{W}_2)\big{)},$$
for $F^+_{\widehat{W}\setminus\widehat{B},\widehat{\s}\rvert_{\widehat{W}\setminus\widehat{B}}}=\Theta\circ\underline{F}^+_{\widehat{W}_2,\widehat{\s}\rvert_{\widehat{W}_2}}\circ\underline{F}^+_{\widehat{W}_1,\widehat{\s}\rvert_{\widehat{W}_1}}$ and $\Theta:\underline{HF}^+\big{(}-\partial\widehat{B},\widehat{\s}\rvert_{\partial\widehat{B}};\Z(\widehat{W}_1)(\widehat{W}_2)\big{)}\rightarrow HF^+\big{(}-\partial\widehat{B},\widehat{\s}\rvert_{\partial\widehat{B}};\Z\big{)}$ induced by the natural projection $\theta:\Z(\widehat{W}_1)(\widehat{W}_2)\rightarrow\Z$. Since 
$F^+_{\widehat{W}\setminus\widehat{B},\widehat{\s}\rvert_{\widehat{W}\setminus\widehat{B}}}\big{(}c^+(\widehat{\xi'})\big{)}\neq 0$, this implies that $\underline{HF}^+\Big{(}-\partial U,\widehat{\s}\rvert_{\partial U};\Z(\widehat{W}_1)\Big{)}\neq 0$, where $\partial U\cong\widehat{F}\times S^1$. Then by Theorem \ref{adjunction}, we have $\langle c_1(\widehat{\s}), [\widehat{F}]\rangle \leq-\chi(\widehat{F})$, so $\widetilde{tb}(K_1,\dots,K_n,F)+\widetilde{r}(K_1,\dots,K_n,F,\s\rvert_{W'},h)\leq-\chi(F)$.

By Remark \ref{remark1}, this construction with reversed orientations of $K_i$ and $F$ yields $\widetilde{tb}(K_1,\dots,K_n,F)-\widetilde{r}(K_1,\dots,K_n,F,\s\rvert_{W'},h)\leq-\chi(F)$,
therefore we obtain the generalized slice Thurston-Bennequin inequality
$$\widetilde{tb}(K_1,\dots,K_n,F)+\rvert \widetilde{r}(K_1,\dots,K_n,F,\s\rvert_{W'},h)\rvert\leq-\chi(F).$$ \end{proof}

\begin{proof}[{\bf{\em Proof of Theorem \ref{rel-slice-genus}(a)(ii)}}] This is a direct application of Theorem \ref{slice-genus} of Mrowka-Rollin, which uses the same topological setup as Theorem (a)(i) above.
\end{proof}

\begin{proof}[{\bf{\em Proof of Theorem \ref{rel-slice-genus}(b)(i)}}] We follow the arguments in the proof of part (a), but we want to avoid the preliminary attaching of 1-handles to connect the boundary of the 4-manifold $W$. This requires us to use the ``hat" version of Heegaard-Floer homology which is better behaved. An important point here is that although the Heegaard-Floer Homology groups are not defined for disconnected 3-manifolds, we can make sense of the maps as an extension of the usual maps to counting polygons in $W$. The goal here is to still obtain a genus bound on the surface $F$ by capping it off and applying an appropriate version of the adjunction inequality.

First, we perform Legendrian surgery along each boundary component $K_i$ of $F$, which gives cobordisms $(V_i,\omega_i)$ from $(M_i,\xi_i)$ to $(M_i',\xi_i')$ given in the standard way by taking a canonical product neighborhood of $M_i$ and attaching a Weinstein 2-handle along one end. By Proposition \ref{non-zero}, $F^+_{V_i,\s_{\omega_i}}(c^+(\xi_i'))=c^+(\xi_i)$. Let $\widetilde{W}=\bigcup_iW\cup_{M_i}V_i$. Then by Theorem \ref{composition},
 
$$ \sum_{\{\widetilde{\s}\in Spin^{\C}(\widetilde{W}) ~|~\widetilde{\s}|_{W}\cong\s, ~\widetilde{\s}|_{V_i}\cong\s_{\omega_i}\}}\pm F^+_{\widetilde{W},\widetilde{\s}}\Big{(}c^+(\xi_1')\otimes\cdots\otimes c^+(\xi_n')\Big{)}=\hspace{1in}$$
$$\hspace{1in}=F^+_{W,\s} \Bigg{(} \bigotimes_{i=1}^n F^+_{V_i,\s_{\omega_i}}(c^+(\xi_i'))\Bigg{)}=F^+_{W,\s}\Big{(}c^+(\xi_1)\otimes\cdots\otimes c^+(\xi_n)\Big{)}\neq0.$$ 

Thus, there exists a $\widetilde{\s}\in Spin^\C(\widetilde{W})$ with $\widetilde{\s}\rvert_{W_1}\cong\s_1$ and $\widetilde{\s}\rvert_{V_2}\cong\s_{\omega_2}$ such that

$$F^+_{\widetilde{W},\widetilde{\s}\rvert_{\widetilde{W}}}\Big{(}c^+(\xi_1')\otimes\cdots\otimes c^+(\xi_n')\Big{)}\neq 0.$$

Denote the above isomorphisms $g:\widetilde{\s}\rvert_{W}\cong\s$, $k_i:\widetilde{s}\rvert_{V_i}\cong\s_{\omega_i}$, and the natural isomorphisms $f_i:\s_{\omega_i}\rvert_{M_i'}\rightarrow\t_{\xi_i'}$ for $i=1,\dots,n$. Define $h_i:\s\rvert_{M_i'}\rightarrow\t_{\xi_i'}$ by 
$$h_i=f_i\circ k_i\circ g^{-1}.$$

Capping off $F$ in $\widetilde{W}$ by the cores of the 2-handles from the Legendrian surgeries, we obtain an embedded closed surface $\widetilde{F}$ in $\widetilde{W}$ satisfying $\chi(\widetilde{F})=\chi(F)+n\leq0$ (we capped off by $n$ 2-discs). Then 
$$[\widetilde{F}]\cdot[\widetilde{F}]=tb(K_1,\dots,K_n, F)-n\geq0$$ 
and 
$$c_1(\tilde{\s},[\widetilde{F}])=r(K_1,\dots,K_n, F, \s\rvert_W, h_1,\dots,h_n).$$

Blow-up $tb(K_1,\dots,K_n,F)-n$ points on the core of one or more (or all) of the 2-handle to obtain a new 4-manifold $\widehat{W}$ with a natural projection $\pi:\widehat{W}\rightarrow\widetilde{W}$. Lift $\tilde{\mathfrak{s}}$ to $\widehat{W}$ and call the lift $\hat{\mathfrak{s}}$, choose the particular lift that evaluates -1 on each exceptional sphere coming from $\pi^{-1}(\widetilde{F})$ and let $\widehat{F}$ be the lift of $\widetilde{F}$ to $\widehat{W}$ obtained by removing the exceptional spheres from $\pi^{-1}(\widetilde{F})$. Then we have 
$$\chi(\widehat{F})=\chi(\widetilde{F})=\chi(F)+n$$ 
and 
$$[\widehat{F}]\cdot[\widehat{F}]=0.$$ 
Additionally, 
$$c_1(\widehat{\s},[\widehat{F}])=r(K_1,\dots,K_n, F, \s\rvert_W, h_1,\dots,h_n)+tb(K_1,\dots,K_n,F)-n.$$

Since $[\widetilde{F}]\cdot[\widetilde{F}]=0$, there is a neighborhood $U$ of $\widehat{F}$ in $\widehat{W}$ diffeomorphic to $\widehat{F}\times D^2$. Consider an embedded a 4-ball $B$ in $W$ and let $\widehat{B}\subset U\subset \widehat{W}$ be the pre-image of $B\subset W\subset\widetilde{W}$ under $\pi$. Then, by Theorem \ref{blow-up}, 

$$F^+_{\widehat{W}\setminus\widehat{B},\widehat{\s}\rvert_{\widehat{W}\setminus\widehat{B}}}\Big{(}c^+(\widehat{\xi_1'})\otimes\cdots\otimes c^+(\widehat{\xi_n'})\Big{)}=F^+_{\widetilde{W}\setminus B,\widetilde{\s}\rvert_{\widetilde{W}\setminus B}}\Big{(}c^+(\xi_1')\otimes\cdots\otimes c^+(\xi_n')\Big{)}\neq 0.$$

Since the location of $\widehat{B}$ does not affect the map $F^+_{\widehat{W}\setminus\widehat{B},\widehat{\s}\rvert_{\widehat{W}\setminus\widehat{B}}}$, we can assume that $\widehat{B}$ is in the interior of $U$. 

Let $\widehat{W}_1=\widehat{W}\setminus U$ and $\widehat{W}_2=U\setminus \widehat{B}$. Then this gives us a composition of two cobordisms with the 3-manifold $\partial U=\widehat{F}\times S^1$ as a ``cut". One of them is $\widehat{W}_1$ with $\partial \widehat{W}_1=\big{(}-\widehat{M'_1}\cup\cdots\cup -\widehat{M'_n}\big{)}\cup-\partial U$ and the other one is $\widehat{W}_2$ with $\partial \widehat{W}_2=-\partial U\cup-\partial \widehat{B}$. By Theorem \ref{composition}, we have the following maps.

$$\underline{F}^+_{\widehat{W},\widehat{\s}\rvert_{\widehat{W}_1}}:HF^+\Big{(}-\widehat{M'_1}\cup\cdots\cup -\widehat{M'_n},\widehat{\t}_{\xi_1'}\otimes\cdots\otimes\widehat{\t}_{\xi_n'};\Z\Big{)}\rightarrow\underline{HF}^+\Big{(}\partial U,\widehat{\s}\rvert_{\partial U};\Z(\widehat{W}_1)\Big{)},$$
(where $\widehat{\t}_{\xi_i}$ is the lift of $\t_{\xi_i}$ in the isomorphism $\widehat{\s}\rvert_{\widehat{M_i'}}\rightarrow\widehat{\t}_{\xi_i}$),

$$\underline{F}^+_{\widehat{W}_2,\widehat{\s}\rvert_{\widehat{W}_2}}:\underline{HF}^+\Big{(}-\partial U,\widehat{\s}\rvert_{\partial U};\Z(\widehat{W}_1)\Big{)}\
\rightarrow\underline{HF}^+\Big{(}-\partial\widehat{B},\widehat{s}\rvert_{\partial\widehat{B}};\Z(\widehat{W}_1)(\widehat{W}_2)\Big{)},$$
such that 
$$F^+_{\widehat{W}\setminus\widehat{B},\widehat{\s}\rvert_{\widehat{W}\setminus\widehat{B}}}=\Theta\circ\underline{F}^+_{\widehat{W}_2,\widehat{\s}\rvert_{\widehat{W}_2}}\circ\underline{F}^+_{\widehat{W},\widehat{\s}\rvert_{\widehat{W}_1}},$$
where
$$\Theta:\underline{HF}^+\Big{(}-\partial\widehat{B},\widehat{\s}\rvert_{\partial\widehat{B}};\Z(\widehat{W}_1)(\widehat{W}_2)\Big{)}\rightarrow HF^+\Big{(}-\partial\widehat{B},\widehat{\s}\rvert_{\partial\widehat{B}};\Z\Big{)}$$
is induced by the natural projection $\theta:\Z(\widehat{W}_1)(\widehat{W}_2)\rightarrow\Z$.

Since $F^+_{\widehat{W}\setminus\widehat{B},\widehat{\s}\rvert_{\widehat{W}\setminus\widehat{B}}}\Big{(}c^+(\widehat{\xi_1'})\otimes\cdots\otimes c^+(\widehat{\xi_n'})\Big{)}\neq 0$, this implies that 
$$\underline{HF}^+\Big{(}-\partial U,\widehat{\s}\rvert_{\partial U};\Z(\widehat{W}_1)\Big{)}\neq 0,$$
where $\partial U\cong\widehat{F}\times S^1$.

Then by Theorem \ref{adjunction}, we have 

$$\langle c_1(\widehat{\s}), [\widehat{F}]\rangle \leq-\chi(\widehat{F}),$$ 
so
$$\widetilde{tb}(K_1,\dots,K_n,F)+\widetilde{r}(K_1,\dots,K_n,F,\s\rvert_{W},h_1,\dots,h_n)\leq-\chi(F).$$

Using Remark \ref{remark1}, reversing orientations of all $K_i$ and of $F$ and going through this construction yields
$$\widetilde{tb}(K_1,\dots,K_n,F)-\widetilde{r}(K_1,\dots,K_n,F,\s\rvert_{W},h_1,\dots,h_n)\leq-\chi(F),$$
therefore we obtain the generalized slice Thurston-Bennequin inequality
$$\widetilde{tb}(K_1,\dots,K_n,F)+\rvert \widetilde{r}(K_1,\dots,K_n,F,\s\rvert_{W},h_1,\dots,h_n)\rvert\leq-\chi(F).$$

\end{proof}

\begin{proof}[{\bf{\em Proof of Theorem \ref{rel-slice-genus}(b)(ii)}}] This is a direct application of the generalized version of Proposition\ref{KMr} of Mrowka-Rollin for a 4-manifold with disconnected boundary, which uses the same topological setup as Theorem (b)(i) above.
\end{proof}

\begin{remark}[{\bf{\em Idea for a Proof of Theorem \ref{rel-slice-genus}(c)}}]
If we follow the same argument as in part (b)(i) with symplectic $(W, \omega)$ and perform Legendrian surgery along each $K_i$ we obtain for each $i=1,\dots,n$ new contact 3-manifolds $(M_i',\xi_i')$ containing the boundary of the attached 2-handles, and symplectic cobordisms $(V_i,\omega_i)$ from $(M_i,\xi_i)$ to $(M_i',\xi_i')$. Then let $\widetilde{W}$ be defined as above and note that it is a symplectic manifold with a symplectic form $\widetilde{\omega}$ such that $\widetilde{\omega}\rvert_W=\omega$ and $\widetilde{\omega}\rvert_{V_i}=\omega_i$. Define $\widetilde{F}$ as above by capping $F$ with the cores of the 2-handles, and then blow-up $\widetilde{tb}(K_1,\dots,K_n,F)-n$ points on the cores of some of the 2-handles to obtain $\widehat{W}$ and $\widehat{F}$ as above. Let $\widehat{\omega}$ denote the blown-up symplectic form on $\widehat{W}$. Let $\widehat{\s}$ be the canonical $Spin^\C$-structure associated to $\widehat{\omega}$. As above, we have
$$\chi(\widehat{F})=\chi(F)+n,\ [\widehat{F}]\cdot[\widehat{F}]=0,$$
and
$$\langle c_1(\widehat{\s}),[\widehat{F}]\rangle=\widetilde{tb}(K_1,\dots,K_n,F)+\widetilde{r}(K_1,\dots,K_n,F,\s_\omega,h^{\omega}_1,\dots,h^{\omega}_n)-n.$$

By Theorem \ref{non-zero} with $\widehat{W}\setminus\widehat{B}$ a symplectic cobordism from $-\widehat{M'_1}\cup\cdots\cup-\widehat{M'_n}$ to $-\partial\widehat{B}$, where $\widehat{B}\subset U$, we have

$$\underline{F}^+_{\widehat{W}\setminus \widehat{B},\widehat{\s}\rvert_{\widehat{W}\setminus \widehat{B}};[\widehat{\omega}\rvert_{\widehat{W}\setminus \widehat{B}}]}\Big{(}c^+(\widehat{\xi_1'};[\widehat{\omega}\rvert_{\widehat{M'_1}}])\otimes\cdots\otimes c^+(\widehat{\xi_n'};[\widehat{\omega}\rvert_{\widehat{M'_n}}])\Big{)}\neq 0.$$

Let $\widehat{W_1}=\widehat{W}\setminus U$ and $\widehat{W_2}=U\setminus\widehat{B}$ as above. We have the following maps.

$$\underline{F}^+_{\widehat{W_1}\setminus \widehat{B},\widehat{\s}\rvert_{\widehat{W_1}\setminus \widehat{B}};[\widehat{\omega}\rvert_{\widehat{W_1}\setminus \widehat{B}}]}:HF^+\Big{(}-\widehat{M'_1}\cup\cdots\cup -\widehat{M'_n},\widehat{\t}_{\xi_1'}\otimes\cdots\otimes\widehat{\t}_{\xi_n'};\Z[\R]_{[\widehat{\omega}\rvert_{\widehat{M'_1}}]\oplus\cdots\oplus[\widehat{\omega}\rvert_{\widehat{M'_n}}]}\Big{)}\rightarrow$$

$$\hspace{2.2in}\rightarrow\underline{HF}^+\Big{(}-\partial U,\widehat{\s}\rvert_{\partial U};\Z[\R]_{[\widehat{\omega}\rvert_{\widehat{M'_1}}]\oplus\cdots\oplus[\widehat{\omega}\rvert_{\widehat{M'_n}}]}(\widehat{W}_1)\Big{)},$$

$$\underline{F}^+_{\widehat{W_2}\setminus \widehat{B},\widehat{\s}\rvert_{\widehat{W_2}\setminus \widehat{B}};[\widehat{\omega}\rvert_{\widehat{W_2}\setminus \widehat{B}}]}:\underline{HF}^+\Big{(}-\partial U,\widehat{\s}\rvert_{\partial U};\Z[\R]_{[\widehat{\omega}\rvert_{\widehat{M'_1}}\oplus\cdots\oplus\widehat{\omega}\rvert_{\widehat{M'_n}}]}(\widehat{W}_1)\Big{)}\rightarrow$$

$$\hspace{1.95in}\rightarrow\underline{HF}^+\Big{(}-\partial\widehat{B},\widehat{s}\rvert_{\partial\widehat{B}};\Z[\R]_{[\widehat{\omega}\rvert_{\widehat{M'_1}}]\oplus\cdots\oplus[\widehat{\omega}\rvert_{\widehat{M'_n}}]}(\widehat{W}_1)(\widehat{W}_2)\Big{)},$$

Although these maps can be defined, it is unclear that they are well-defined as invariants and that the coefficients are well-behaved. Things are complicated by the fact that we have many independent $U$ actions, and we need to be careful about the ring over which we form the tensor product (things can get very infinite). ÊWe can define the above maps by counting holomorphic polygons whose Spin$^\C$-structures are the ones associated to the symplectic form, however, but there do not seem to exist any invariance theorems in this context. ÊThat is, it isn't clear that the polygon counts are Spin$^\C$ 4-manifold invariants.
\end{remark}

\begin{remark}[Second idea for Proof of Theorem \ref{rel-slice-genus}(c)] Performing Legendrian surgery along the boundary components of $F$ and embedding the resulting symplectic 4-manifold with multiple boundary components into a closed symplectic 4-manifold allows the application of Proposition \ref{KMr}.
\end{remark}

\begin{remark} Since $\partial \widehat{U}\cong S^1\times \widehat{F}$, we could directly show $\underline{HF}^+(-\partial U,\widehat{\t};\Z[\R])\neq 0$ in some special cases (see \cite{jabuka}).
\end{remark}

\begin{proof}[{\bf{\em Proof of Corollary \ref{cobrelsliceg}}}] Apply Theorem \ref{rel-slice-genus} to the ``pair-of-pants" cobordism between $M_1\cup-M_2$ and $-\partial B$, where $B$ is a 4-ball in $W$, whose lift $\widehat{B}$ will be as in the proof of Theorem \ref{rel-slice-genus}. We perform Legendrian surgery along $K_1\subset -M_1$ and along $K_2\subset M_2$. This amounts to attaching a concave symplectic Weinstein 2-handle to $M_1$ and a convex symplectic Weinstein 2-handle to $M_2$. We blow up (only) the core of the convex 2-handle $\widetilde{reltb}(K_1,K_2,F)-2$ times. Then proceed as in the proofs of Theorem \ref{rel-slice-genus}.
\end{proof}

\end{document}